\newcommand{\abs}[1]{\left\vert #1 \right\vert}
\newcommand{\norm}[1]{\left\Vert #1 \right\Vert}
\newcommand{\bignorm}[1]{\bigl\Vert #1 \bigr\Vert}
\newcommand{\C}{\mathbb{C}}
\newcommand{\R}{\mathbb{R}}
\newcommand{\angles}[1]{\langle #1 \rangle}
\DeclareMathOperator{\diag}{diag}
\DeclareMathOperator{\im}{Im}
\DeclareMathOperator{\re}{Re}
\newtheorem{theorem}{Theorem}[section]
\newtheorem{lemma}{Lemma}[section]
\theoremstyle{definition}
\theoremstyle{remark}
\numberwithin{equation}{section}
\title[Chern-Simons-Higgs with finite energy]{Global well-posedness of the Chern-Simons-Higgs equations with finite energy}
\author[S. Selberg]{Sigmund Selberg}
\address{Department of Mathematical Sciences\\
Norwegian University of Science and Technology\\
Alfred Getz' vei 1\\
N-7491 Trondheim\\ Norway}
\email{sigmund.selberg@math.ntnu.no}
\author[A. Tesfahun]{Achenef Tesfahun}
\address{Department of Mathematical Sciences\\
Norwegian University of Science and Technology\\
Alfred Getz' vei 1\\
N-7491 Trondheim\\ Norway}
\email{achenef.tesfahun@math.ntnu.no}
\keywords{Chern-Simons-Higgs; well-posedness; null structure; Lorenz gauge}
\subjclass[2000]{35Q40; 35L70}
\begin{document}

\begin{abstract}
We prove that the Cauchy problem for the Chern-Simons-Higgs equations on the (2+1)-dimensional Minkowski space-time is globally well posed for initial data with finite energy. This improves a result of Chae and Choe, who proved global well-posedness for more regular data. Moreover, we prove local well-posedness even below the energy regularity, using the the null structure of the system in Lorenz gauge and bilinear space-time estimates for wave-Sobolev norms.
\end{abstract}

\maketitle

\section{Introduction}\label{Introduction}

The (2+1)-dimensional abelian Chern-Simons-Higgs model was proposed by Hong, Kim and Pac \cite{hkp1990} and Jackiw and Weinberg~\cite{jw1990} in the study of vortex solutions in the abelian Chern-Simons theory. The Lagrangian for the model is
$$
  \mathcal L = \frac{\kappa}{4} \epsilon^{\mu\nu\rho} A_\mu F_{\nu\rho} + D_\mu\phi \overline{D^\mu\phi} - V\left(\abs{\phi}^2\right),
$$
on the Minkowski space-time $\R^{1+2} = \R_t \times \R_x^2$ with metric $g_{\mu\nu} = \diag(1,-1,-1)$. Here $D_\mu = \partial_\mu - iA_\mu$ is the covariant derivative associated to the gauge field $A_\mu \in \R$, $F_{\mu\nu} = \partial_\mu A_\nu - \partial_\nu A_\mu$ is the curvature, $\phi \in \C$ is the Higgs field, $V(\abs{\phi}^2) \in \R$ is a Higgs potential, $\kappa > 0$ is a Chern-Simons coupling constant, and $\epsilon^{\mu\nu\rho}$ is the skew-symmetric tensor with $\epsilon^{012}=1$. Greek indices range from 0 to 2, Latin indices from 1 to 2, and repeated upper/lower indices are implicitly summed.

The corresponding Euler-Lagrange equations are
\begin{equation}\label{CSH}
  F_{\mu\nu} = \frac{1}{\kappa} \epsilon_{\mu\nu\rho} J^\rho,
  \qquad
  D_\mu D^\mu \phi = -\phi V'\left( \abs{\phi}^2 \right),
\end{equation}
where
$$
  J^\rho = 2\im\left(\overline{\phi}D^\rho\phi\right).
$$
There is a conserved energy,
$$
  E(t) = \int_{\R^2} \left( \sum_{\mu=0}^2 \abs{D_\mu \phi(t,x)}^2 + V\left(\abs{\phi(t,x)}^2\right) \right) \, dx,
$$
and the equations are invariant under the gauge transformations
\begin{equation}\label{GaugeTransformation}
  A_\mu \to A_\mu' = A_\mu + \partial_\mu\chi,
  \qquad
  \phi \to \phi' = e^{i\chi} \phi,
  \qquad
  D_\mu \to D_\mu' = \partial_\mu - iA_\mu',
\end{equation}
hence we may impose an additional gauge condition. In this paper we rely on the Lorenz condition $\partial^\mu A_\mu = 0$.

A typical potential is $V(r) = \kappa^{-2} r(1-r)^2$ (see \cite{hkp1990, jw1990}), in which case there are two possible boundary conditions to make the energy finite: Either $\abs{\phi} \to 1$ as $\abs{x} \to \infty$ (the \emph{topological} case) or $\abs{\phi} \to 0$ as $\abs{x} \to \infty$ (the \emph{non-topological} case).

We are interested in the Cauchy problem for the non-topological case, which received considerable attention recently.  Local well-posedness for low-regularity data was studied in \cite{h2007,b2009,jy2011,h2011}, but the energy regularity was not quite reached; Huh \cite{h2011} came arbitrarily close to energy using the Coulomb gauge. In this paper we close the remaining gap, using the Lorenz gauge. In fact, we prove that the problem is locally well posed not only at the energy regularity but even a little below it. From the local finite-energy well-posedness we get the corresponding global result by exploiting the conservation of energy and the residual gauge freedom within Lorenz gauge. In particular, we improve the earlier result of Chae and Choe \cite{cc2002}, who proved global well-posedness for more regular data, namely with one derivative extra in $L^2$ compared with energy.

In order to pose the Cauchy problem one should know the observables $F_{\mu\nu}$, $J^\rho$ and $E$ at time $t=0$, so it suffices to specify $\phi(0)$ and $D_\mu\phi(0)$. Since we are interested in the non-topological case we assume $V(0) = 0$. Moreover we assume that $V'(r)$ has polynomial growth, hence $E(0)$ is absolutely convergent if
\begin{gather}
  \label{FE1}
  D_\mu\phi(0) \in L^2,
  \\
  \label{FE2}
  \phi(0) \in L^p \quad \text{for all $2 \le p < \infty$},
\end{gather}
which imply
\begin{equation}\label{CurrentRegularity}
  J^\rho(0) = 2\im\bigl(\overline{\phi(0)} D^\rho\phi(0)\bigr) \in \dot H^{-1/2},
\end{equation}
since by the Hardy-Littlewood-Sobolev inequality on $\R^2$,
\begin{equation}\label{Current}
  \norm{2\im(\overline{f}g)}_{\dot H^{-1/2}} \le C \norm{\overline{f}g}_{L^{4/3}} \le \norm{f}_{L^4} \norm{g}_{L^2}.
\end{equation}

Here $\dot H^s = \dot H^s(\R^2)$, $\abs{s} < 1$, is the completion of $\mathcal S(\R^2)$ with respect to the norm $\norm{f}_{\dot H^s} = \bignorm{\abs{\xi}^s \widehat f(\xi)}_{L^2}$, where $\widehat f(\xi)$ is the Fourier transform of $f(x)$. A direct characterization is
$$
  \dot H^s = \mathcal F^{-1}\bigl( L^2(\abs{\xi}^{2s} d\xi)\bigr) \qquad (\abs{s} < 1).
$$
Here $s > -1$ ensures that $\mathcal S \subset L^2(\abs{\xi}^{2s} d\xi)$ (densely), and $s < 1$ ensures that functions in $L^2(\abs{\xi}^{2s} d\xi)$ are tempered, so the inverse Fourier transform can be applied. We also need the inhomogeneous space $H^s=H^s(\R^2)$, which is the completion of $\mathcal S(\R^2)$ with respect to $\norm{f}_{H^s} = \bignorm{\angles{\xi}^s \widehat f(\xi)}_{L^2}$, where $\angles{\xi} = (1+\abs{\xi}^2)^{1/2}$.

Recall the Hardy-Littlewood-Sobolev inequalities
\begin{gather}
  \norm{f}_{L^q} \le C \norm{\abs{\nabla}^s f}_{L^p} \qquad \left(1 < p < q < \infty, \quad \frac{1}{p} - \frac{1}{q} = \frac{s}{2}\right),
  \\
  \norm{f}_{L^\infty} \le C \norm{\angles{\nabla}^s f}_{L^p} \qquad \left( p \ge 1, \quad s > \frac{2}{p}\right),
\end{gather}
where $\abs{\nabla} = (-\Delta)^{1/2}$ and $\angles{\nabla} = (1-\Delta)^{1/2}$. In particular, $H^1 \subset L^p$ for all $2 \le p < \infty$, and
\begin{equation}\label{CovariantEstimate}
  \norm{fg}_{L^2} \le \norm{f}_{L^4} \norm{g}_{L^4} \lesssim \norm{f}_{\dot H^{1/2}} \norm{g}_{H^1}.
\end{equation}
The notation $a \lesssim b$ stands for $a \le Cb$.

\section{Main results}

Since the value of the positive constant $\kappa$ is irrelevant for our analysis, we shall set $\kappa = 1$. Augmented with the Lorenz gauge condition $\partial^\mu A_\mu = 0$, \eqref{CSH} reads
\begin{subequations}\label{CSHL}
\begin{align}
  \label{CSHLa}
  \partial_t A_j - \partial_j A_0 &= \epsilon_{jk} J^k,
  \\
  \label{CSHLb}
  \partial_1 A_2 - \partial_2 A_1 &= J_0,
  \\
  \label{CSHLc}
  \partial_1 A_1 + \partial_2 A_2 &= \partial_t A_0,
  \\
  \label{CSHLd}
  D_\mu D^\mu \phi &= -\phi V'\left( \abs{\phi}^2 \right),
\end{align}
\end{subequations}
where $J^\rho = 2\im\left(\overline{\phi}D^\rho\phi\right)$ and $\epsilon_{jk}$ is the skew-symmetric tensor with $\epsilon_{12}=1$.

We pose the Cauchy problem in terms of data for $A_\mu$ and $(\phi,\partial_t \phi)$. The question then arises: What are the natural data spaces, given that \eqref{FE1}--\eqref{CurrentRegularity} should hold? To answer this, first note that the Lorenz condition leaves some gauge freedom, since it is preserved by \eqref{GaugeTransformation} if $\square \chi = 0$, where $\square = \partial^\mu\partial_\mu = \partial_t^2 - \Delta$ is the d'Alembertian. So formally, at least, we may impose the initial constraints
\begin{equation}\label{EllipticGauge}
  A_0(0) = 0, \qquad \partial^j A_j(0) = 0,
\end{equation}
for if these are not already satisfied, they will be after a gauge transformation \eqref{GaugeTransformation} with gauge function $\chi$ satisfying
\begin{equation}\label{GaugeFunction}
  \square \chi = 0, \qquad \Delta \chi(0) = \partial^j A_j(0), \qquad \partial_t \chi(0) = -A_0(0).
\end{equation}
But from \eqref{EllipticGauge} and \eqref{CSHLb} we get
\begin{equation}\label{EllipticPotential}
  \Delta A_j(0) = \epsilon_{jk} \partial^k J_0(0),
\end{equation}
so $A_j(0)$ should be in $\dot H^{1/2}$, recalling \eqref{CurrentRegularity}. Then from \eqref{FE1} and \eqref{FE2} we infer that $(\phi,\partial_t\phi)(0) \in H^1 \times L^2$, since $\partial_t\phi(0) = D_0\phi(0)$ and $\partial_j\phi(0) = D_j\phi(0) + iA_j(0)\phi(0)$, and the last term is in $L^2$ by \eqref{CovariantEstimate}.

So now we know what the correct data spaces for $A_\mu$ and $(\phi,\partial_t \phi)$ are. Note, however, that \eqref{CSHLb} imposes an initial constraint. The following lemma shows that given any data for $(\phi,\partial_t\phi)$ in $H^1 \times L^2$, there exists an initial potential $A_\mu(0)$ satisfying this constraint as well as the finite energy requirements \eqref{FE1} and \eqref{FE2}.

\begin{lemma}
Given data
$$
  (\phi,\partial_t\phi)(0) \in H^{1} \times L^2, 
$$
there exists an initial potential
$$
  A_\mu(0) \in \dot H^{1/2}
$$
satisfying \eqref{EllipticGauge}, and \eqref{CSHLb} at $t=0$. Moreover, \eqref{FE1} and \eqref{FE2} are satisfied.
\end{lemma}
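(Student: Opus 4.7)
The plan is to build $A_\mu(0)$ explicitly by hand, since only two unknowns ($A_1(0), A_2(0)$) and two equations (a Coulomb-type divergence constraint and the magnetic-field equation \eqref{CSHLb}) are involved, and by checking the finite energy requirements a posteriori.

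First I would take $A_0(0) := 0$, which trivially lies in $\dot H^{1/2}$, satisfies the first part of \eqref{EllipticGauge}, and forces $D_0\phi(0) = \partial_t\phi(0) \in L^2$. The right-hand side of \eqref{CSHLb} at $t=0$ then becomes
$$
  J_0(0) = 2\im\bigl(\overline{\phi(0)}\,\partial_t\phi(0)\bigr),
$$
and the bound \eqref{Current} combined with the embedding $H^1\subset L^4$ gives $J_0(0)\in \dot H^{-1/2}$ with $\norm{J_0(0)}_{\dot H^{-1/2}} \lesssim \norm{\phi(0)}_{H^1}\norm{\partial_t\phi(0)}_{L^2}$.

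Next I would prescribe $A_j(0)$ as the unique divergence-free vector field whose curl equals $J_0(0)$, via the Fourier multiplier
$$
  \widehat{A_j(0)}(\xi) = i\,\epsilon_{jk}\,\xi_k\,\abs{\xi}^{-2}\,\widehat{J_0(0)}(\xi).
$$
Because $\bigabs{\abs{\xi}^{1/2}\widehat{A_j(0)}(\xi)} = \abs{\xi}^{-1/2}\bigabs{\widehat{J_0(0)}(\xi)}$, Plancherel gives $\norm{A_j(0)}_{\dot H^{1/2}} \lesssim \norm{J_0(0)}_{\dot H^{-1/2}} < \infty$, so $A_j(0)\in\dot H^{1/2}$. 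The antisymmetry of $\epsilon_{jk}$ yields $\xi^j\epsilon_{jk}\xi_k = 0$, so $\partial^jA_j(0)=0$, completing \eqref{EllipticGauge}; and a direct Fourier computation gives $\partial_1A_2(0)-\partial_2A_1(0)= J_0(0)$, which is \eqref{CSHLb} at $t=0$.

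It remains to verify the finite-energy conditions. The condition \eqref{FE2} is immediate from $\phi(0)\in H^1$ and the Sobolev embedding $H^1\subset L^p$ for $2\le p<\infty$. For \eqref{FE1}, we have $D_0\phi(0)=\partial_t\phi(0)\in L^2$ as noted, while
$$
  D_j\phi(0) = \partial_j\phi(0) - iA_j(0)\phi(0),
$$
where $\partial_j\phi(0)\in L^2$ by hypothesis, and the remaining term lies in $L^2$ by \eqref{CovariantEstimate} together with $A_j(0)\in\dot H^{1/2}$ and $\phi(0)\in H^1$. The only nontrivial step is the Riesz-potential-type gain from $\dot H^{-1/2}$ to $\dot H^{1/2}$ in defining $A_j(0)$, and this is handled cleanly on the Fourier side; everything else is bookkeeping.
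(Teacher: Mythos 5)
Your proposal is correct and follows essentially the same route as the paper: set $A_0(0)=0$, define $A_j(0)$ by the Riesz-type multiplier (your $i\,\epsilon_{jk}\,\xi_k\,\abs{\xi}^{-2}\,\widehat{J_0(0)}(\xi)$ is exactly the paper's $-(-\Delta)^{-1/2}\epsilon_{jk}R^kJ_0(0)$ written on the Fourier side), bound it in $\dot H^{1/2}$ via \eqref{Current}, and get \eqref{FE1} from \eqref{CovariantEstimate} and \eqref{FE2} from Sobolev embedding. The only cosmetic difference is that you verify \eqref{EllipticGauge} and \eqref{CSHLb} by direct Fourier computation, whereas the paper deduces them from \eqref{EllipticPotential} together with the fact that $\Delta f=0$ forces $f=0$ for $f\in\dot H^{-1/2}$; both verifications are valid.
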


\begin{proof} First note that \eqref{FE2} holds by the embedding $H^1 \subset L^p$, $2 \le p < \infty$. Set $A_0(0) = 0$ and
$$
  A_j(0) = - (-\Delta)^{-1/2} \epsilon_{jk} R^k J_0(0),
$$
where $R_k = (-\Delta)^{-1/2}\partial_k$ is the Riesz transform. By \eqref{Current},
$$
  \norm{A_j(0)}_{\dot H^{1/2}}
  \lesssim \norm{J_0(0)}_{\dot H^{-1/2}}
  \lesssim \norm{\phi(0)}_{L^4} \norm{\partial_t\phi(0)}_{L^2}
  \lesssim \norm{\phi(0)}_{H^1} \norm{\partial_t\phi(0)}_{L^2},
$$
and $D_\mu\phi(0) \in L^2$ follows from \eqref{CovariantEstimate}. By \eqref{EllipticPotential}, $\Delta(\partial_1 A_2(0) - \partial_2 A_1(0) - J_0(0)) = 0$ and $\Delta(\partial_1 A_1(0) + \partial_2 A_2(0)) = 0$, and in general, $\Delta f=0$ implies $f=0$ if $f \in \dot H^{-1/2}$, since $\widehat f$ is a tempered function. Thus \eqref{EllipticGauge} holds, as does \eqref{CSHLb} at $t=0$.
\end{proof}

More generally, we shall prove local well-posedness for any data
\begin{equation}\label{CSHLdata}
  A_\mu(0) \in \dot H^{1/2}, \qquad (\phi,\partial_t\phi)(0) \in H^{1} \times L^2,
\end{equation}
satisfying \eqref{CSHLb} initially:
\begin{equation}\label{CSHLconstraint}
  \partial_1 A_2(0) - \partial_2 A_1(0) = J_0(0) = 2\im\left(\overline{\phi(0)}D_0\phi(0)\right).
\end{equation}
Then $D_\mu\phi(0) = \partial_\mu\phi(0) - i A_\mu(0)\phi(0)$ is in $L^2$, with norm bounded in terms of the norm of \eqref{CSHLdata}, in view of \eqref{CovariantEstimate}.

\begin{theorem}\label{Thm1}
The Chern-Simons-Higgs-Lorenz Cauchy problem \eqref{CSHL}, \eqref{CSHLdata}, \eqref{CSHLconstraint} is locally well posed, for any potential $V \in C^\infty(\R_+;\R)$ such that $V(0) = 0$ and all derivatives of $V$ have polynomial growth. More precisely, there exists a time $T > 0$, which is a decreasing and continuous function of the data norm
$$
  \sum_{\mu=0}^2\norm{A_\mu(0)}_{\dot H^{1/2}} + \norm{\phi(0)}_{H^1} + \norm{\partial_t \phi(0)}_{L^2},
$$
and a solution $(A,\phi)$ of \eqref{CSHL} on $(-T,T) \times \R^2$ with the regularity
\begin{equation}\label{SolutionSpace}
\begin{gathered}
  A_\mu \in C([-T,T];\dot H^{1/2}),
  \\
  \phi \in C([-T,T];H^1), \qquad \partial_t \phi \in C([-T,T];L^2).
\end{gathered}
\end{equation}
The solution is unique in a certain subset of this regularity class. Moreover, the solution depends continuously on the data, and higher regularity persists. In particular, if the data are smooth, then so is the solution.
\end{theorem}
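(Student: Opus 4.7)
The plan is to reduce \eqref{CSHL} to a coupled system of semilinear wave equations and then solve it by a fixed-point argument in wave-Sobolev spaces adapted to $\Box$ on $\R^{1+2}$. Differentiating \eqref{CSHLa}--\eqref{CSHLc} in time and space and using the Lorenz condition $\partial^\mu A_\mu = 0$, I get
$$
  \Box A_\mu = -\epsilon_{\mu\nu\rho}\partial^\nu J^\rho,
$$
and the Higgs equation \eqref{CSHLd}, once $D_\mu$ is expanded and the gauge condition applied, becomes
$$
  \Box \phi = 2iA^\mu\partial_\mu\phi + A^\mu A_\mu \phi - \phi V'\bigl(\abs{\phi}^2\bigr).
$$
The initial velocity $\partial_t A_\mu(0)$ is determined algebraically by \eqref{CSHLa} and \eqref{CSHLc} together with the constraint \eqref{CSHLconstraint}, and I would verify after the fact that the Lorenz condition and \eqref{CSHLb} are propagated by the wave system; this follows from the current conservation identity $\partial_\mu J^\mu = 0$ (itself a consequence of the Higgs equation) together with uniqueness for $\Box u = 0$ from trivial data.

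The second step is to expose the null structure. Since $\epsilon_{\mu\nu\rho}$ is antisymmetric, the symmetric double derivative $\bar\phi\,\partial^\nu\partial^\rho\phi$ drops out of $\partial^\nu J^\rho$, leaving
$$
  \epsilon_{\mu\nu\rho}\partial^\nu J^\rho = 2\epsilon_{\mu\nu\rho}\im\bigl(\partial^\nu\bar\phi\,D^\rho\phi\bigr),
$$
which after expanding $D^\rho\phi$ is the classical Klainerman null form $\epsilon_{\mu\nu\rho}\im(\partial^\nu\bar\phi\,\partial^\rho\phi)$ plus a cubic correction involving $A$. The interaction $A^\mu\partial_\mu\phi$ in the $\phi$-equation likewise has a null structure once one splits $A_\mu$ into half-wave pieces, because the Lorenz constraint $\partial^\mu A_\mu = 0$ forces a cancellation at parallel frequencies. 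These null forms are indispensable at the finite-energy regularity $A_\mu(0)\in \dot H^{1/2}$, $(\phi,\partial_t\phi)(0)\in H^1\times L^2$, and in fact make it possible to go slightly below it, as asserted in the theorem.

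The main work is then to establish the bilinear space-time estimates in the appropriate wave-Sobolev spaces $X^{s,b}_\pm$, with $s=1/2$ for $A$ and $s=1$ for $\phi$, and $b>1/2$ close to $1/2$. I would use dyadic Littlewood-Paley and modulation decompositions and invoke, and adapt to the $2+1$-dimensional setting, the null-form product estimates of Klainerman-Tataru and d'Ancona-Foschi-Selberg type. The polynomial nonlinearity $\phi V'(\abs{\phi}^2)$ is comparatively tame, because $H^1 \hookrightarrow L^p$ for all finite $p$ on $\R^2$, so one controls it on a short time slab by a standard composition argument. Once the bilinear estimates are in hand, Picard iteration yields a local solution on a time interval $T$ depending continuously and monotonically on the data norm; uniqueness in the $X^{s,b}$-ball in which the iteration runs, continuous dependence on the data, and persistence of higher regularity follow by applying essentially the same estimates to differences and to higher derivatives of solutions. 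The most delicate point will be closing the bilinear estimate for the source $\epsilon_{\mu\nu\rho}\partial^\nu J^\rho$ of the $A$-equation at the critical scaling $s=1/2$, where the null cone cancellation, specifically available in the Lorenz gauge, is decisive.
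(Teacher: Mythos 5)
Your roadmap coincides with the paper's: reduction to $\square A_\mu = -\epsilon_{\mu\nu\rho}\partial^\nu J^\rho$ coupled with a wave equation for $\phi$; propagation of the Lorenz condition and of \eqref{CSHLb} via $\partial^\mu J_\mu = 0$ and uniqueness for the homogeneous wave equation; the two null structures (the $Q^{\nu\rho}$ form coming from the antisymmetry of $\epsilon_{\mu\nu\rho}$, and the structure of $A_\mu\partial^\mu\phi$ extracted from the Lorenz condition after half-wave splitting, as in the Maxwell--Klein--Gordon case); and Picard iteration in $X^{s,b}$-type spaces using the D'Ancona--Foschi--Selberg product law. So far this is exactly the paper's strategy.

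There is, however, a genuine gap: you never confront the fact that the data space for the potential is the \emph{homogeneous} space $\dot H^{1/2}$. On $\R^2$ one has $\dot H^{1/2} \not\subset H^{1/2}$ because of low frequencies (take $\widehat{A}(\xi) = \abs{\xi}^{-5/4}\chi_{\abs{\xi}<1}$, which lies in $L^2(\abs{\xi}\,d\xi)$ but not in $L^2(\angles{\xi}\,d\xi)$), so an iteration for $A$ in the inhomogeneous spaces $X^{1/2,b}_\pm$ --- which is the setting of the product law and of all the bilinear estimates you invoke --- cannot even get started: the data norm is infinite. This low-frequency issue is precisely what separates Theorem \ref{Thm1} from the below-energy result, Theorem \ref{Thm3}, where the data for $A_\mu$ are deliberately taken in inhomogeneous $H^s$ to avoid it. The paper's proof in Section \ref{EnergyLWP} supplies the missing mechanism: it keeps $\square$ rather than $\square+1$ in the $A$-equation, defines the half-wave splitting with $\abs{\nabla}^{-1}$ and the divergence/curl decomposition with homogeneous Riesz transforms $(-\Delta)^{-1/2}\partial_j$, iterates with the norms $\norm{\abs{\nabla}^{1/2}A_{\mu,\pm}}_{X_\pm^{0,b}}$, and splits $A$ into low and high spatial frequencies, reducing the high part to the Section \ref{LWP} estimates and handling the low part by the separate bounds \eqref{LF1}--\eqref{LF2}. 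Some such device must appear in your proof; it is the actual content of Theorem \ref{Thm1} beyond Theorem \ref{Thm3}. A related minor error: Theorem \ref{Thm1} is stated exactly at energy regularity, and the ``slightly below energy'' claim belongs to Theorem \ref{Thm3}; conflating the two obscures precisely the homogeneous/inhomogeneous distinction your argument fails to address.
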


The proof is given in Section \ref{EnergyLWP}.

Our plan is now to show that (i) the time $T$ in fact only depends on $\mathcal I(0)$, where
$$
  \mathcal I(t) = \norm{\phi(t)}_{L^2} + \sum_{\mu=0}^2 \norm{D_\mu\phi(t)}_{L^2},
$$
and (ii) $\mathcal I(t)$ is a priori controlled for all time in terms of $E(0)$ and $\norm{\phi(0)}_{L^2}$. Then it will of course follow that the solutions extend globally in time.

To prove (i) we apply the gauge transformation \eqref{GaugeTransformation} with $\chi$ satisfying \eqref{GaugeFunction}.

\begin{lemma}\label{GaugeLemma}
Given data $A_\mu(0) \in \dot H^{1/2}$, there exists $\chi(t,x)$ with the regularity
$$
  \chi \in C(\R^{1+2}), \qquad \partial_\mu \chi \in C(\R;\dot H^{1/2}),
$$
and satisfying \eqref{GaugeFunction}.
\end{lemma}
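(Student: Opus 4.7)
The plan is to construct $\chi$ as the solution of the homogeneous wave equation $\square\chi=0$ with initial data $\chi(0)=f$, $\partial_t\chi(0)=g$, where $g:=-A_0(0)\in\dot H^{1/2}$ and $f$ is any tempered distribution with $\Delta f=\partial^j A_j(0)=:h\in\dot H^{-1/2}$. The one genuine obstacle is that $f$ naturally lives in $\dot H^{3/2}(\R^2)$, which is \emph{outside} the range $|s|<1$ in which the paper defines $\dot H^s$. My remedy is to never handle $f$ or $\chi$ directly, but instead to construct the derivatives $B_\mu:=\partial_\mu\chi$ first and recover $\chi$ only at the end.

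For the construction, I would solve the four free wave equations
$$
\square B_\mu=0,\quad B_0(0)=g,\ \partial_tB_0(0)=h,\quad B_j(0)=\partial_j\Delta^{-1}h,\ \partial_tB_j(0)=\partial_j g.
$$
Each item of initial data is obtained from $g$ or $h$ by a first-order spatial Fourier multiplier; the Riesz-type identities give $\partial_j\Delta^{-1}h\in\dot H^{1/2}$ and $\partial_j g\in\dot H^{-1/2}$, so every data pair lies in $\dot H^{1/2}\times\dot H^{-1/2}$. The propagators $\cos(t|\nabla|)$ and $|\nabla|^{-1}\sin(t|\nabla|)$ map that pair continuously to $\dot H^{1/2}$, producing $B_\mu\in C(\R;\dot H^{1/2})$.

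Next I would verify that $B_\mu\,dx^\mu$ is a closed $1$-form and that $\partial^\mu B_\mu=0$. For each of the antisymmetrized combinations $\partial_\mu B_\nu-\partial_\nu B_\mu$ and for the divergence $\partial^\mu B_\mu$, both the value at $t=0$ and its time derivative at $t=0$ vanish by direct computation from the prescribed data; since each combination also solves $\square(\cdot)=0$, uniqueness for the wave Cauchy problem forces it to be identically zero. By the Poincar\'e lemma on the simply connected manifold $\R^{1+2}$, there is a function $\chi$, unique up to an additive constant, with $\partial_\mu\chi=B_\mu$; then $\square\chi=\partial^\mu B_\mu=0$ automatically, and the initial conditions $\Delta\chi(0)=h$ and $\partial_t\chi(0)=g$ in \eqref{GaugeFunction} hold by construction.

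For the continuity claim $\chi\in C(\R^{1+2})$, I would chain two Sobolev embeddings. The 2D inequality $\dot H^{1/2}(\R^2)\hookrightarrow L^4(\R^2)$ gives $\nabla_{t,x}\chi\in C(\R;L^4_x)\subset L^4_{\mathrm{loc}}(\R^{1+2})$. Then Morrey's embedding $W^{1,4}(\R^3)\hookrightarrow C^{0,1/4}(\R^3)$ upgrades this to local H\"older continuity of $\chi$ in spacetime, the additive constant being fixed arbitrarily, e.g.\ by $\chi(0,0)=0$. As noted above, the only real difficulty is that $\chi$ falls just outside the paper's $\dot H^s$ scale; the gradient-level construction is precisely what sidesteps this.
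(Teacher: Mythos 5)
Your proposal is correct in substance, but it takes a genuinely different route from the paper's proof. The paper keeps the explicit solution formula $\chi(t) = \cos(t\abs{\nabla})f + \sin(t\abs{\nabla})\abs{\nabla}^{-1}g$ and confronts the obstacle you identify --- the potential $f$ solving $\Delta f = \partial^j A_j(0)$ lives at the level $\dot H^{3/2}$, outside the range $\abs{s}<1$ --- by splitting $A_\mu(0)$ into a high-frequency part, for which $f$ exists outright in the inhomogeneous space $H^{3/2}$, and a low-frequency part, for which $A_\mu(0)$ is smooth and the curl-free field $F_k = -R_kR^jA_j(0) \in \dot H^{1/2}\cap C^\infty$ integrates to a smooth $f$ by the classical Poincar\'e lemma in $\R^2$. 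You instead never construct $f$: you solve free wave equations for all four derivatives $B_\mu = \partial_\mu\chi$ (your $B_j(0) = \partial_j\Delta^{-1}h$ is precisely the paper's $F_j$), verify that $B_\mu\,dx^\mu$ is closed and divergence-free, and integrate once, in spacetime, at the very end. This buys a proof with no frequency decomposition and yields local H\"older continuity of $\chi$; the cost is that the integration now happens for merely $L^4_{\mathrm{loc}}$ coefficients rather than smooth ones, which forces two pieces of technology the paper's splitting avoids and which you leave implicit: (i) the Poincar\'e lemma must be invoked for $1$-forms with distributional coefficients, and (ii) Morrey's embedding requires $\chi \in W^{1,4}_{\mathrm{loc}}(\R^3)$, whereas a priori $\chi$ is only a distribution whose gradient lies in $L^4_{\mathrm{loc}}$, so you need the standard fact that such a distribution is (a.e.\ equal to) a function in $W^{1,4}_{\mathrm{loc}}$. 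A further small caveat: your appeal to ``uniqueness for the wave Cauchy problem'' is made for quantities of very low regularity (e.g.\ $\partial_t(\partial_jB_k - \partial_kB_j)$ sits at the level $\dot H^{-3/2}$, again outside the paper's $\dot H^s$ scale), so you should either state uniqueness in the precise distributional class you use, or --- simpler --- verify the closedness and divergence identities directly on the spatial Fourier side from the explicit propagator formulas for $B_\mu$, which is a one-line computation for each. With these points made explicit, your argument is complete.
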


\begin{proof} The solution of \eqref{GaugeFunction} is
$$
  \chi(t) = \cos(t\abs{\nabla}) f + \sin(t\abs{\nabla})\abs{\nabla}^{-1} g,
$$
where $g = - A_0(0) \in \dot H^{1/2}$ and $f$ should satisfy
\begin{equation}\label{chiElliptic}
  \Delta f = \partial^j A_j(0).
\end{equation}

First, if the Fourier transform of $A_\mu(0)$ is supported in $\{ \xi \in \R^2 \colon \abs{\xi} \ge 1\}$, then $g \in H^{1/2}$, and \eqref{chiElliptic} has a unique solution $f \in H^{3/2}$, so $\chi \in C(\R;H^{3/2}) \subset C(\R^{1+2})$, and $\partial_\mu \chi \in C(\R;H^{1/2})$.

Now assume that $A_\mu(0)$ has Fourier support in $\{ \xi \in \R^2 \colon \abs{\xi} < 1\}$. Then $A_\mu(0)$ is smooth, but it is not obvious that \eqref{chiElliptic} has a solution (what is clear is that the solution, if it exists, will be smooth). Formally, $f$ should be given by, with $R_j = (-\Delta)^{-1/2} \partial_j$ the Riesz transform,
$$
  f = - (-\Delta)^{-1/2} R^j A_j(0),
$$
but it is not clear that this is meaningful. However, if we take the gradient we get something well-defined:
$$
  \partial_k f = F_k \equiv - R_k R^j A_j(0) \in \dot H^{1/2} \cap C^\infty.
$$
But $(F_1,F_2)$ is a smooth vector field on $\R^2$ with zero curl:
$$
  \partial_1 F_2 - \partial_2 F_1 = 0,
$$
hence $(F_1,F_2)$ is the gradient of a smooth function, which we denote $f$. Then it follows that \eqref{chiElliptic} is satisfied. So now $f,g \in C^\infty(\R^2)$, hence $\chi \in C^\infty(\R^{1+2})$. Moreover, $\partial_j f, g \in \dot H^{1/2}$, so $\partial_\mu \chi \in C(\R;\dot H^{1/2})$.
\end{proof}

We also need the covariant Sobolev inequality, proved in \cite{gv1981},
\begin{equation}\label{gv}
  \norm{\phi(0)}_{L^p} \le C\norm{\phi(0)}_{L^2}^{2/p}\left(\sum_{j=1}^2 \norm{D_j \phi(0)}_{L^2}\right)^{1-2/p}
  \qquad (2 < p < \infty),
\end{equation}
which holds for all $\phi(0) \in H^1$ such that $D_j\phi(0) \in L^2$ (the regularity of the real-valued functions $A_j(0)$ is irrelevant here).

\begin{theorem}\label{Thm2}
The solution $(A,\phi)$ from Theorem \ref{Thm1} exists up to a time $T > 0$ which is a continuous and decreasing function of
$$
  \mathcal I(0) = \norm{\phi(0)}_{L^2} + \sum_{\mu=0}^2 \norm{D_\mu\phi(0)}_{L^2}.
$$
\end{theorem}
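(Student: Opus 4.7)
My plan is to exploit the residual gauge freedom within Lorenz gauge, as encoded in Lemma \ref{GaugeLemma}. The key point is that $\mathcal{I}(0)$ is gauge invariant, since $|\phi|$ and $|D_\mu\phi|$ are, so it suffices to bound the full Sobolev data norm of Theorem \ref{Thm1} by a function of $\mathcal{I}(0)$ after transforming to a gauge in which the elliptic constraints \eqref{EllipticGauge} hold at $t=0$.

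Given data satisfying \eqref{CSHLdata} and \eqref{CSHLconstraint}, I would apply the gauge transformation $(A_\mu,\phi) \mapsto (A_\mu', \phi') = (A_\mu + \partial_\mu\chi,\, e^{i\chi}\phi)$ with $\chi$ produced by Lemma \ref{GaugeLemma}. By \eqref{GaugeFunction} the transformed data satisfy $A_0'(0) = 0$ and $\partial^j A_j'(0) = 0$, and the constraint \eqref{CSHLconstraint} persists because both $\partial_1 A_2 - \partial_2 A_1$ and $J_0 = 2\im(\overline{\phi}D_0\phi)$ are gauge invariant. The derivation of \eqref{EllipticPotential} therefore applies to the primed potential, which combined with \eqref{Current} and the covariant Sobolev inequality \eqref{gv} at $p=4$ yields
$$
\norm{A_\mu'(0)}_{\dot H^{1/2}} \lesssim \norm{\phi(0)}_{L^4}\norm{D_0\phi(0)}_{L^2} \lesssim \mathcal{I}(0)^2.
$$

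For the Higgs data, $\norm{\phi'(0)}_{L^2} = \norm{\phi(0)}_{L^2}$, and $\norm{\partial_t\phi'(0)}_{L^2} = \norm{D_0\phi(0)}_{L^2}$ because $A_0'(0) = 0$, while $\partial_j\phi'(0) = e^{i\chi(0)}(D_j\phi(0) + iA_j'(0)\phi(0))$ gives
$$
\norm{\partial_j\phi'(0)}_{L^2} \le \mathcal{I}(0) + \norm{A_j'(0)}_{L^4}\norm{\phi(0)}_{L^4} \lesssim \mathcal{I}(0) + \mathcal{I}(0)^3
$$
via the Sobolev embedding $\dot H^{1/2}(\R^2) \hookrightarrow L^4$ applied to the bound just obtained for $A_j'(0)$, together with \eqref{gv}. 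Thus the full data norm of $(A',\phi')$ in the sense of Theorem \ref{Thm1} is controlled by a polynomial in $\mathcal{I}(0)$, and invoking that theorem produces a solution $(A',\phi')$ on $[-T,T]$ with $T$ a continuous and decreasing function of $\mathcal{I}(0)$.

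The original solution is recovered by the inverse transformation $A_\mu = A_\mu' - \partial_\mu\chi$, $\phi = e^{-i\chi}\phi'$ on the same interval, and the regularity class \eqref{SolutionSpace} is preserved thanks to $\chi \in C(\R^{1+2})$, $\partial_\mu\chi \in C(\R;\dot H^{1/2})$ from Lemma \ref{GaugeLemma}, combined with \eqref{CovariantEstimate}. The argument is essentially bookkeeping once the viewpoint is fixed; the only real obstacle is to recognise that in the gauge satisfying \eqref{EllipticGauge} the elliptic relation \eqref{EllipticPotential} and the covariant Sobolev inequality \eqref{gv} conspire to convert the Sobolev data norm into the gauge-invariant covariant norm $\mathcal{I}(0)$.
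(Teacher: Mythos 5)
Your proposal is correct and follows essentially the same route as the paper's proof: gauge-transform via Lemma \ref{GaugeLemma} to reach the gauge \eqref{EllipticGauge}, use the elliptic relation \eqref{EllipticPotential} together with \eqref{Current} and \eqref{gv} to get $\norm{A_j'(0)}_{\dot H^{1/2}} \lesssim \mathcal I(0)^2$, bound the transformed Higgs data by $\mathcal I(0) + \mathcal I(0)^3$, invoke Theorem \ref{Thm1}, and undo the transformation. The only step the paper spells out that you pass over silently is why \eqref{EllipticPotential} pins down $A_j'(0)$ exactly (namely, $\Delta f = 0$ with $f \in \dot H^{1/2}$ forces $f = 0$, so $A_j'(0)$ equals the Riesz-transform expression), but this is a minor elision within the same argument.
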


\begin{proof} Given data \eqref{CSHLdata} satisfying \eqref{CSHLconstraint}, apply the gauge transformation \eqref{GaugeTransformation} with $\chi$ as in Lemma \ref{GaugeLemma}. Then \eqref{GaugeTransformation} preserves the regularity \eqref{SolutionSpace}, as does its inverse, obtained by replacing $\chi$ by $-\chi$. In the new gauge,
$$
  A_0'(0) = 0, \qquad \partial^j A_j'(0) = 0,
$$
and by the latter combined with \eqref{CSHLconstraint} (which is gauge invariant),
$$
  \Delta A_j'(0) = \epsilon_{jk} \partial^k J_0(0).
$$
Since we know that $A_j'(0)$ belongs to $\dot H^{1/2}$, and since in general $\Delta f = 0$ implies $f=0$ if $f \in \dot H^{1/2}$ (then $\widehat f$ is a tempered function), we conclude that
$$
  A_j'(0) = - (-\Delta)^{-1/2} \epsilon_{jk} R^k J_0(0),
$$
where $R_k = (-\Delta)^{-1/2}\partial_k$ is the Riesz transform. Thus, by \eqref{Current},
\begin{equation}\label{Aprime}
  \norm{A_j'(0)}_{\dot H^{1/2}}
  \lesssim \norm{J_0(0)}_{\dot H^{-1/2}}
  \lesssim \norm{\phi(0)}_{L^4} \norm{D_0\phi(0)}_{L^2} \lesssim \mathcal I(0)^2,
\end{equation}
where we applied \eqref{gv} in the last step. Moreover,
\begin{gather*}
  \phi'(0) = e^{i\chi(0)}\phi(0),
  \\
  \partial_\mu\phi'(0) = D_\mu'\phi'(0) + iA_\mu'(0)\phi'(0) = e^{i\chi(0)}D_\mu\phi(0) + ie^{i\chi(0)}A_\mu'(0)\phi(0),
\end{gather*}
hence
$$
  \norm{\phi'(0)}_{L^2} + \sum_{\mu=0}^2 \norm{\partial_\mu\phi'(0)}_{L^2}
  \le
  \mathcal I(0) + \sum_{\mu=0}^2 \norm{A_\mu'(0)}_{L^4} \norm{\phi(0)}_{L^4}
  \lesssim
  \mathcal I(0) + \mathcal I(0)^3,
$$
where we used \eqref{gv} and \eqref{Aprime}.

Thus, applying Theorem \ref{Thm1} we get the solution $(A',\phi')$ up to a time $T > 0$ which is a continuous and decreasing function of $\mathcal I(0)$. Finally, reverse the gauge transformation to get the solution $(A,\phi)$.
\end{proof}

Finally, we show that the solutions extend globally in time.

\begin{theorem}\label{Thm2b}
In addition to the hypotheses in Theorem \ref{Thm1}, assume that
$$
  V(r) \ge -\alpha^2 r
$$
for all $r \ge 0$ and some $\alpha > 0$. Then the solution $(A,\phi)$ from Theorem \ref{Thm1} exists globally in time and has the regularity \eqref{SolutionSpace} for all $T > 0$.
\end{theorem}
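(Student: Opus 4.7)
The plan is to reduce global existence to a uniform a priori bound on $\mathcal I(t)$ over arbitrary compact time intervals, and then invoke Theorem \ref{Thm2} in a standard continuation-criterion argument. Suppose the forward maximal time of existence is $T^* < \infty$. If $\mathcal I(t)$ is bounded by some $M < \infty$ on $[0,T^*)$, then by the monotonicity in Theorem \ref{Thm2} the solution starting from any $t_0 \in [0,T^*)$ extends forward by at least some $\delta > 0$ depending only on $M$; choosing $t_0$ within distance $\delta/2$ of $T^*$ yields an extension past $T^*$, contradicting maximality. The backward direction is symmetric, so it suffices to produce the a priori bound.

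Two ingredients enter. Conservation of the energy $E(t)$, combined with the hypothesis $V(r) \ge -\alpha^2 r$, gives
$$
  \sum_{\mu=0}^2 \norm{D_\mu \phi(t)}_{L^2}^2 = E(0) - \int_{\R^2} V\bigl(\abs{\phi(t)}^2\bigr)\, dx \le E(0) + \alpha^2 \norm{\phi(t)}_{L^2}^2.
$$
For the $L^2$ norm of $\phi$, since $A_0$ is real-valued we have $\re(\overline{\phi}\, iA_0 \phi) = 0$, hence
$$
  \frac{d}{dt}\norm{\phi(t)}_{L^2}^2 = 2\re \int \overline{\phi}\, \partial_t \phi\, dx = 2\re \int \overline{\phi}\, D_0 \phi \, dx \le 2 \norm{\phi(t)}_{L^2} \norm{D_0 \phi(t)}_{L^2}.
$$
Dividing by $\norm{\phi(t)}_{L^2}$ and substituting the first display yields
$$
  \frac{d}{dt}\norm{\phi(t)}_{L^2} \le \norm{D_0 \phi(t)}_{L^2} \le \sqrt{E(0)} + \alpha \norm{\phi(t)}_{L^2}.
$$
Gr\"onwall's inequality then bounds $\norm{\phi(t)}_{L^2}$ exponentially in $t$ in terms of $E(0)$ and $\norm{\phi(0)}_{L^2}$, and the first display converts this into a bound on $\mathcal I(t)$ on each compact time interval.

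The main technical obstacle is justifying the conservation of $E(t)$ and the differentiation of $\norm{\phi(t)}_{L^2}^2$ for solutions in the regularity class \eqref{SolutionSpace}. For smooth solutions both identities follow from standard integration by parts, using the evolution equations \eqref{CSHL} and the Lorenz gauge. For general finite-energy data I would approximate $(A_\mu,\phi,\partial_t\phi)(0)$ by smooth data, appeal to the persistence of higher regularity and continuous dependence from Theorem \ref{Thm1} to obtain smooth solutions converging to $(A,\phi)$ on any fixed compact time interval, verify the identities for the smooth approximants, and pass to the limit. Once the uniform bound on $\mathcal I(t)$ is in place, the continuation argument of the first paragraph delivers the global solution with the stated regularity \eqref{SolutionSpace} on $[-T,T]$ for every $T > 0$.
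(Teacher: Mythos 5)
Your proposal is correct and follows essentially the same route as the paper: reduce global existence to an a priori bound on $\mathcal I(t)$ via Theorem \ref{Thm2} (the paper leaves the continuation argument implicit), then combine conservation of energy with the hypothesis $V(r) \ge -\alpha^2 r$ and a Gr\"onwall argument on the $L^2$ norm of $\phi$, justifying the identities by approximation with smooth solutions exactly as the paper indicates. The only cosmetic differences are that the paper works with $\norm{\phi(t)}_{L^2}^2$ throughout (avoiding your division by a possibly vanishing norm) and writes $\abs{E(0)}$ rather than $E(0)$, which you should as well, since under the stated hypothesis on $V$ the energy need not be nonnegative and $\sqrt{E(0)}$ would otherwise be undefined.
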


In view of Theorem \ref{Thm2} it suffices to show that
$$
  \mathcal I(t) = \norm{\phi(t)}_{L^2} + \sum_{\mu=0}^2 \norm{D_\mu\phi(t)}_{L^2}
$$
is a priori bounded on every finite time interval. For this, we rely of course on the conservation of energy (which is satisfied since our local solutions are limits of smooth solutions with compact spatial support). First we note, using $E(t)=E(0)$ and the assumption $V(r) \ge -\alpha^2 r$, that
\begin{equation}\label{Energy1}
  \norm{D_\mu\phi(t)}_{L^2}^2 = E(0) - \int V\left(\abs{\phi(t,x)}^2\right) \, dx
  \le \abs{E(0)} + \alpha^2 \norm{\phi(t)}_{L^2}^2.
\end{equation}
Then
\begin{align*}
  \frac{d}{dt} \left( \norm{\phi(t)}_{L^2}^2 \right)
  &= \int 2\re\left(\overline{\phi(t,x)} D_0\phi(t,x) \right) \, dx
  \\
  &\le 2 \norm{\phi(t)}_{L^2} \norm{D_0\phi(t)}_{L^2}
  \\
  &\le 2 \norm{\phi(t)}_{L^2} \left( \abs{E(0)} + \alpha^2 \norm{\phi(t)}_{L^2}^2  \right)^{1/2}
  \\
  &\le \alpha^{-1} \abs{E(0)} + 2 \alpha \norm{\phi(t)}_{L^2}^2,
\end{align*}
hence by Gr\"onwall's lemma,
\begin{equation}\label{Energy3}
  \norm{\phi(t)}_{L^2}^2 \le e^{2 \alpha \abs{t}} \left( \norm{\phi(0)}_{L^2}^2 + \abs{t} \alpha^{-1} \abs{E(0)} \right).
\end{equation}
By \eqref{Energy1} and \eqref{Energy3} we control $\mathcal I(t)$, and Theorem \ref{Thm2b} is proved.

It remains to prove Theorem \ref{Thm1}. Note that in Lorenz gauge, $\partial^\nu F_{\mu\nu} = -\square A_\mu$, so \eqref{CSHL} implies
\begin{equation}\label{CSHLwave}
  \square A_\mu = -\epsilon_{\mu\nu\rho} \partial^\nu J^\rho,
  \qquad
  \partial^\mu A_\mu = 0,
  \qquad
  D_\mu D^\mu\phi = - \phi V'\left( \abs{\phi}^2 \right),
\end{equation}
and this is the system we actually solve.

Then we have to check that, conversely, \eqref{CSHLwave} implies \eqref{CSHLa} and \eqref{CSHLb}, assuming that the latter two are satisfied at $t=0$. But then
$$
  v_j = \partial_t A_j - \partial_j A_0 - \epsilon_{jk} J^k,
  \qquad
  w = \partial_1 A_2 - \partial_2 A_1 - J_0,
$$
vanish at time $t=0$. Moreover, using \eqref{CSHLa}, $\square A_j = \epsilon_{jk}(-\partial_t J^k + \partial^k J_0)$, and $\partial^\mu J_\mu = 0$ (which follows from the last equation in \eqref{CSHLwave}), one finds
$$
  \partial_t v_j = \epsilon_{jk} \partial^k w,
  \qquad
  \partial_t w = \partial_1 v_2 - \partial_2 v_1,
$$
and these vanish at $t=0$ since $v_j$ and $w$ do. Taking another time derivative gives $\square v_j = \partial_j (\partial^k v_k)$ and $\square w = 0$. But $\square A_0 = - \partial_1 J_2 + \partial_2 J_1$ implies $\partial^k v_k=0$, hence
$$
  \square v_j = 0, \qquad \square w = 0.
$$
Since the data vanish, we conclude that $v_j=w=0$, so \eqref{CSHLa} and \eqref{CSHLb} hold.

Before proving Theorem \ref{Thm1}, we consider in the following section the problem of local well-posedness with minimal regularity, and it turns out that we can get below the energy regularity. Here we take data for $A_\mu$ in inhomogeneous Sobolev spaces.

\section{Low regularity local well-posedness}\label{LWP}

The system \eqref{CSHLwave} expands to
\begin{subequations}\label{Waves}
\begin{gather}
  \label{WaveA}
  (\square+1) A_\mu = - \epsilon_{\mu\nu\rho} \im Q^{\nu\rho}(\partial\overline{\phi},\partial\phi) + 2\epsilon_{\mu\nu\rho} \partial^\nu\left( A^\rho \abs{\phi}^2 \right) + A_\mu,
  \\
  \label{LG}
  \partial^\mu A_\mu = 0,
  \\
  \label{WaveB}
  (\square+1) \phi = 2i A_\mu \partial^\mu\phi + A_\mu A^\mu \phi - \phi V'\left( \abs{\phi}^2 \right) + \phi,
\end{gather}
\end{subequations}
where $Q_{\alpha\beta}(\partial u, \partial v) = \partial_\alpha u \partial_\beta v - \partial_\beta u \partial_\alpha v$ is the standard null form. Here we added $A_\mu$ and $\phi$ to each side of \eqref{WaveA} and \eqref{WaveB}, respectively, to get the operator $\square + 1$; this is done to avoid a singularity in \eqref{Splitting} below. We specify data
\begin{equation}\label{WaveData}
  A_\mu(0) \in H^s,
  \qquad (\phi,\partial_t\phi)(0) \in H^{s+1/2} \times H^{s-1/2}.
\end{equation}
The data for $\partial_t A_\mu$ are given by the constraints
\begin{align}
  \label{LorenzConstraint}
  \partial_t A_0(0) &= \partial_1 A_1(0) + \partial_2 A_2(0) \in H^{s-1},
  \\
  \label{ConstraintB}
  \partial_t A_j(0) &= \partial_j A_0(0) + \epsilon_{jk} J^k(0) \in H^{s-1},
\end{align}
where $J_k = 2\im\left(\overline{\phi}D_k\phi\right) = 2\im\left(\overline{\phi}\partial_k\phi\right) + 2A_k\abs{\phi}^2$, hence $J^k(0) \in H^{s-1}$ with norm bounded in terms of the norm of \eqref{WaveData}, as follows from:

\begin{lemma} If $s > 0$, the following estimates hold:
\begin{gather}
  \label{SobA}
  \norm{fg}_{H^{s-1}} \lesssim \norm{f}_{H^{s+1/2}} \norm{g}_{H^{s-1/2}},
  \\
  \label{SobB}
  \norm{fgh}_{H^{s-1}} \lesssim \norm{f}_{H^s} \norm{g}_{H^{s+1/2}} \norm{h}_{H^{s+1/2}}.
\end{gather}
\end{lemma}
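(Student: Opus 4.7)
The plan is to reduce both estimates to the standard Sobolev product estimate on $\R^2$:
$$
  \norm{uv}_{H^{\sigma}} \lesssim \norm{u}_{H^{\sigma_1}} \norm{v}_{H^{\sigma_2}},
$$
valid whenever $\sigma_1 + \sigma_2 \ge 0$, $\sigma \le \min(\sigma_1,\sigma_2)$, and $\sigma_1 + \sigma_2 - \sigma > 1$. This product estimate is standard and follows by applying Plancherel to reduce to a convolution bound $\bignorm{\angles{\xi}^{\sigma}(\hat u * \hat v)}_{L^2}$, decomposing the $\eta$-integration into the three regions $\{\abs{\xi - \eta} \sim \abs{\eta}\}$, $\{\abs{\xi - \eta} \gg \abs{\eta}\}$, $\{\abs{\xi - \eta} \ll \abs{\eta}\}$, and on each region transferring one power of $\angles{\cdot}$ from the larger factor, then applying Cauchy-Schwarz (the condition $\sigma_1 + \sigma_2 - \sigma > 1$ is what makes the resulting $\eta$-integral converge in $\R^2$).

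For \eqref{SobA}, I take $(\sigma_1,\sigma_2) = (s+\half,s-\half)$ and $\sigma = s-1$. Then $\sigma_1 + \sigma_2 = 2s > 0$, $\sigma = s-1 \le s-\half = \min(\sigma_1,\sigma_2)$, and $\sigma_1 + \sigma_2 - \sigma = s+1 > 1$, all as consequences of $s > 0$.

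For \eqref{SobB}, I apply the product estimate twice. First, with $(\sigma_1,\sigma_2) = (s+\half,s+\half)$ and $\sigma = s$, I obtain
$$
  \norm{gh}_{H^s} \lesssim \norm{g}_{H^{s+1/2}} \norm{h}_{H^{s+1/2}},
$$
since $\sigma_1 + \sigma_2 = 2s+1 > 0$, $s \le s+\half$, and $\sigma_1 + \sigma_2 - \sigma = s+1 > 1$. Then, with $(\sigma_1,\sigma_2) = (s,s)$ and $\sigma = s-1$, I obtain
$$
  \norm{f(gh)}_{H^{s-1}} \lesssim \norm{f}_{H^s} \norm{gh}_{H^s},
$$
since $\sigma_1 + \sigma_2 = 2s > 0$, $s-1 \le s$, and $\sigma_1 + \sigma_2 - \sigma = s+1 > 1$. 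Composing these two inequalities yields \eqref{SobB}.

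The only subtlety worth flagging is that in \eqref{SobA} the second factor has negative Sobolev regularity when $s < \half$, so one cannot simply combine H\"older's inequality with Sobolev embeddings. This is precisely the situation the general product estimate is designed to handle, as it requires only $\sigma_1 + \sigma_2 \ge 0$ rather than positivity of each exponent separately. In every application above, the strict inequality $\sigma_1 + \sigma_2 - \sigma > 1$ collapses to $s > 0$, which is the hypothesis of the lemma.
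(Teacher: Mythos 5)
Your proof is correct, and its core is the same as the paper's: both arguments reduce the lemma to the two-dimensional Sobolev multiplication law and obtain the trilinear estimate \eqref{SobB} by applying a bilinear product estimate twice. The paper quotes the sharp (if-and-only-if) law from \cite{dfs2010}, namely that $\norm{fg}_{H^{-s_0}} \lesssim \norm{f}_{H^{s_1}}\norm{g}_{H^{s_2}}$ holds iff $s_0+s_1+s_2 \ge 1$, $s_0+s_1+s_2 \ge \max(s_0,s_1,s_2)$, with at most one equality; your sufficient conditions $\sigma_1+\sigma_2 \ge 0$, $\sigma \le \min(\sigma_1,\sigma_2)$, $\sigma_1+\sigma_2-\sigma > 1$ are precisely the specialization of these (the first DFS condition made strict, the second unpacked into pairwise sums), so your ``standard product estimate'' is a legitimate non-sharp instance of the same law, and the arithmetic in all three of your applications checks out, each collapsing to $s+1>1$.

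The one genuine difference is the grouping in \eqref{SobB}: you bound $\norm{gh}_{H^{s}}$ first and then $\norm{f(gh)}_{H^{s-1}}$, so every factor in every intermediate step has regularity at least $s$; the paper instead groups $g\cdot(fh)$, combining \eqref{SobA} with the intermediate estimate $\norm{fh}_{H^{s-1/2}} \lesssim \norm{f}_{H^{s}}\norm{h}_{H^{s-1/2}}$. That intermediate estimate, as printed, is in fact false for $0<s<1$: the sum of its exponents is $(1/2-s)+s+(s-1/2)=s<1$, violating the necessary condition (i) of the very product law being invoked (it is presumably a typo for $\norm{h}_{H^{s+1/2}}$ on the right, which is what the combination with \eqref{SobA} actually requires and which does hold for $s>0$). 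Your grouping sidesteps this issue entirely, which is a small but real advantage. The only soft spot in your write-up is that your justification of the product estimate itself is a sketch; since the estimate is standard, it would be cleaner to cite it (e.g., the $H^s$ product law in \cite{dfs2010}, as the paper does) rather than half-prove it by the region decomposition.
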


\begin{proof}
This follows from the $H^s$ product law in two dimensions (see, e.g., \cite{dfs2010}), which states that, for $s_0,s_1,s_2 \in \R$, the estimate
$$
  \norm{fg}_{H^{-s_0}} \lesssim \norm{f}_{H^{s_1}} \norm{g}_{H^{s_2}}
$$
holds if and only if (i) $s_0 + s_1 + s_2 \ge 1$, (ii) $s_0 + s_1 + s_2 \ge \max(s_0,s_1,s_2)$ and (iii) at most one of (i) and (ii) is an equality. In particular, for $s > 0$ this implies \eqref{SobA}, as well as
$$
  \norm{fg}_{H^{s-1/2}} \lesssim \norm{f}_{H^{s}} \norm{g}_{H^{s-1/2}}
$$
and the latter combined with \eqref{SobA} gives \eqref{SobB}.
\end{proof}

\begin{theorem}\label{Thm3}
The Chern-Simons-Higgs-Lorenz Cauchy problem \eqref{Waves}--\eqref{ConstraintB} is locally well posed if $s > 3/8$, assuming that the potential $V(r)$ is a polynomial of degree $n$, where if $s < 1/2$ we assume $n < 1 + 2/(1-2s)$, whereas if $s \ge 1/2$ there is no restriction on $n$. To be precise, there exists a time $T > 0$, which is a decreasing and continuous function of the initial data norm
$$
  \norm{A(0)}_{H^s} + \norm{\phi(0)}_{H^{s+1/2}} + \norm{\partial_t \phi(0)}_{H^{s-1/2}},
$$
and a solution $(A,\phi)$ of \eqref{Waves} on $(-T,T) \times \R^2$ with the regularity
\begin{gather*}
  A_\mu \in C([-T,T];H^s), \qquad \partial_t A_\mu \in C([-T,T];H^{s-1}),
  \\
  \phi \in C([-T,T];H^{s+1/2}), \qquad \partial_t \phi \in C([-T,T];H^{s-1/2}).
\end{gather*}
The solution is unique in a certain subset of this regularity class. Moreover, the solution depends continuously on the data, and higher regularity persists. In particular, if the data are smooth, then so is the solution.
\end{theorem}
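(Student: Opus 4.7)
The plan is to solve the system \eqref{Waves} by a contraction mapping argument in wave-Sobolev spaces $X^{s,b}$ adapted to the Klein-Gordon operator $\square+1$, with $b$ slightly larger than $1/2$. First I recast \eqref{WaveA} and \eqref{WaveB} as integral equations via Duhamel's formula, taking the data \eqref{WaveData}--\eqref{ConstraintB} as the linear part. The standard linear theory yields, for $(\square+1)u = F$,
$$
\|u\|_{X^{s,b}_T} \lesssim \|u(0)\|_{H^s} + \|\partial_t u(0)\|_{H^{s-1}} + T^{\delta} \|F\|_{X^{s,b-1+\delta}_T}
$$
for some $\delta > 0$, giving the small-time smallness needed to close a fixed point.

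The problem then reduces to a collection of multilinear estimates for the nonlinear terms in \eqref{Waves}. Three structurally different pieces appear. (a) The null source $\epsilon_{\mu\nu\rho}\im Q^{\nu\rho}(\partial\overline{\phi},\partial\phi)$ in \eqref{WaveA} is controlled by the $(2+1)$-dimensional null form estimate in $X^{s,b}$, which bounds $\|Q^{\nu\rho}(\partial\overline{\phi},\partial\phi)\|_{X^{s-1,b-1+\delta}}$ by $\|\phi\|_{X^{s+1/2,b}}^2$ precisely in the range $s > 3/8$; it is this estimate that fixes the regularity threshold of the theorem. (b) The most delicate term in the $\phi$ equation is $2iA_\mu\partial^\mu\phi$. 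Here I exploit the Lorenz condition $\partial^\mu A_\mu = 0$: I split $A_\mu = A_\mu^{(1)} + A_\mu^{(2)}$, where $A_\mu^{(1)}$ solves the free $(\square+1)$ equation with data \eqref{WaveData}, \eqref{LorenzConstraint}--\eqref{ConstraintB}, and $A_\mu^{(2)}$ is the Duhamel contribution fed by the null-form right-hand side of \eqref{WaveA}. For the homogeneous contribution $A_\mu^{(1)}\partial^\mu\phi$, the Fourier symbol vanishes on the intersection of the two characteristic cones thanks to the constraint, producing a null cancellation that again reduces matters to a $(2+1)$-dimensional bilinear null-form estimate; for $A_\mu^{(2)}\partial^\mu\phi$ the source of $A_\mu^{(2)}$ is itself a null form, leading to a trilinear estimate that is far less restrictive. (c) The remaining terms $\epsilon_{\mu\nu\rho}\partial^\nu(A^\rho|\phi|^2)$, $A_\mu A^\mu\phi$ and $\phi V'(|\phi|^2)$ are controlled by iterating \eqref{SobA}--\eqref{SobB} together with the embedding $X^{s,b} \hookrightarrow C_t H^s_x$; the degree restriction $n < 1 + 2/(1-2s)$ when $s < 1/2$ enters exactly here, to absorb a $(2n+1)$-fold product of $H^{s+1/2}$ factors via Sobolev embedding.

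With these multilinear estimates in hand, existence follows from Picard iteration on a ball in $X^{s,b}_T$, while uniqueness in the corresponding subclass, continuous dependence on data, and persistence of higher regularity all reduce to applying the same estimates to differences of solutions. Finally, the equivalence between the system \eqref{Waves} one actually solves and the original Lorenz-gauge system \eqref{CSHL} is verified by the argument sketched after Theorem \ref{Thm1}: the defects $v_j$, $w$, and $\partial^\mu A_\mu$ satisfy a homogeneous wave system with vanishing data (using the current conservation $\partial^\mu J_\mu = 0$), hence vanish identically.

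The principal obstacle is step (b): showing that the innocuous-looking product $A_\mu\partial^\mu\phi$ can be treated as a null form once the Lorenz constraint is inserted, and then closing the resulting bilinear estimate in $X^{s,b}$ down to $s$ arbitrarily close to $3/8$. This requires a careful dyadic decomposition in space-time frequency, separating high-high, high-low and low-high interactions, with the worst (high-high-to-low) case on or near the null cone being the one that determines the threshold. The null form estimate in (a) is by now comparatively standard, and the algebraic terms in (c) are routine once (a) and (b) are in place.
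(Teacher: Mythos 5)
Your overall framework (Duhamel formula plus contraction in $X^{s,b}$, a null form estimate for $Q^{\nu\rho}(\partial\overline\phi,\partial\phi)$, product estimates for the cubic and polynomial terms, and the a posteriori recovery of the gauge condition) matches the paper's strategy, but your step (b) --- which you yourself identify as the principal obstacle --- contains a genuine gap. You split $A_\mu = A^{(1)}_\mu + A^{(2)}_\mu$ into free and Duhamel parts and claim that for $A^{(1)}_\mu\partial^\mu\phi$ the Fourier symbol vanishes on the intersection of the characteristic cones ``thanks to the constraint.'' This is false: the free field $A^{(1)}_\mu$ does \emph{not} satisfy the Lorenz condition. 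The constraints \eqref{LorenzConstraint}--\eqref{ConstraintB} give $\partial^\mu A_\mu(0)=0$, but the second piece of free data needed for that cancellation, namely $\partial_t\bigl(\partial^\mu A^{(1)}_\mu\bigr)(0)=0$, fails: using $(\square+1)A^{(1)}_\mu=0$ and \eqref{ConstraintB} one computes $\partial_t\bigl(\partial^\mu A^{(1)}_\mu\bigr)(0) = -A_0(0) - \epsilon_{jk}\partial_j J^k(0)$, which is nonzero for generic data. For the true solution this quantity vanishes only because of the nonlinear source in \eqref{WaveA} together with current conservation $\partial^\mu J_\mu = 0$; neither ingredient is available for $A^{(1)}$ alone. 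Consequently $\partial^\mu A^{(1)}_\mu$ is a nontrivial free wave at regularity $H^{s-1}$, i.e.\ exactly as large as $\partial A^{(1)}$ itself, so the error in your claimed cancellation is of the same size as the term you are trying to estimate and cannot be absorbed perturbatively. The same objection applies inside the Picard iteration: individual iterates never satisfy the gauge condition, so no estimate may invoke $\partial^\mu A_\mu=0$ for pieces of (approximate) solutions; the gauge condition can only be exploited if it is inserted into the equations \emph{algebraically, before iterating}.

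That is what the paper does instead: it splits the spatial potential as $\mathbf A = \mathbf A^{\text{df}} + \mathbf A^{\text{cf}} + (1-\Delta)^{-1}\mathbf A$ (see \eqref{Adf}--\eqref{Acf}) and writes $A_\mu\partial^\mu\phi = \mathfrak B_1 - \mathfrak B_2 - \mathfrak B_3$ as in \eqref{MainBilinear}; the piece $\mathfrak B_2 = \mathbf A^{\text{df}}\cdot\nabla\phi$ is the classical Klainerman--Machedon null form \eqref{P2}, while for $\mathfrak B_1 = A_0\partial_t\phi + A^{\text{cf}}_j\partial^j\phi$ the Lorenz condition is used as a substitution, $A^{\text{cf}}_j = -iR_j(A_{0,+}-A_{0,-})$ in \eqref{CurlFree}, yielding the manifestly bilinear expression \eqref{P1} whose symbol is $\lesssim \angles{\zeta}\theta^2 + $ lower order (see \eqref{Symbol}). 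The modified system \eqref{ModifiedSplitWaves} thus carries the null structure in its very formulation, every iterate enjoys the good estimates, and the Lorenz condition is recovered afterwards as a propagated initial constraint. Two further inaccuracies: the trilinear estimate you assert for $A^{(2)}_\mu\partial^\mu\phi$ (null-form source fed back into the bilinear term) is nowhere proved and is not routine in $2+1$ dimensions near $s=3/8$; and the threshold $3/8$ is not fixed by the $Q^{\nu\rho}$ estimate alone, but by optimizing $b$ between the competing constraints $s > 1/6 + b/3$ (from the null form reduction) and $s > 1-b$ (from the cubic terms and the $\mathfrak B_j$ estimates), which balance exactly at $b = 5/8$, $s = 3/8$.
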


To prove this we iterate in $X^{s,b}$-spaces, so by standard methods we reduce to proving estimates for the right hand sides in \eqref{Waves}. The most difficult terms are the two bilinear ones, for which null structure is needed. The first term on the right hand side of \eqref{WaveA} is already a null form, whereas the first term on the right hand side of \eqref{WaveB} appears also in the Maxwell-Klein-Gordon system in Lorenz gauge, and we showed in \cite{st2010} that it has a null structure. To reveal this structure we transform the variables:
\begin{equation}\label{Splitting}
\begin{gathered}
  A_\mu = A_{\mu,+} + A_{\mu,-}, \qquad \phi = \phi_+ + \phi_-,
  \\
  A_{\mu,\pm} = \frac12 \left( A_\mu \pm i^{-1}\angles{\nabla}^{-1} \partial_t A_\mu \right),
  \qquad
  \phi_{\pm} = \frac12 \left( \phi \pm i^{-1} \angles{\nabla}^{-1}\partial_t \phi\right).
\end{gathered}
\end{equation}
Then \eqref{Waves} transforms to
\begin{subequations}\label{SplitWaves}
\begin{gather}
  \label{SplitWaveA}
  \left(i\partial_t \pm \angles{\nabla} \right) A_{\mu,\pm} = \pm 2^{-1}\angles{\nabla}^{-1} \left( \text{R.H.S. \eqref{WaveA}} \right),
  \\
  \partial^\mu A_\mu = 0,
  \\
  \label{SplitWaveB}
  \left(i\partial_t \pm \angles{\nabla} \right) \phi_\pm = \pm 2^{-1}\angles{\nabla}^{-1} \left( \text{R.H.S. \eqref{WaveB}} \right).
\end{gather}
\end{subequations}

We split the spatial part $\mathbf A=(A_1,A_2)$ of the potential into divergence-free and curl-free parts and a smoother part:
\begin{gather}
  \mathbf A = \mathbf A^{\text{df}} + \mathbf A^{\text{cf}} + (1-\Delta)^{-1} \mathbf A,
  \\
  \label{Adf}
  \mathbf A^{\text{df}} = (R_1R_2A_2 - R_2R_2A_1,R_1R_2A_1-R_1R_1A_2),
  \\
  \label{Acf}
  \mathbf A^{\text{cf}} = (-R_1R_2A_2 - R_1R_1A_1,-R_1R_2A_1-R_2R_2A_2),
\end{gather}
where
$$
  R_j = (1-\Delta)^{-1/2}\partial_j
$$
is bounded on $L^p$, $1 < p < \infty$. Now write
\begin{equation}\label{MainBilinear}
  A_\mu\partial^\mu\phi
  = \left( A_0 \partial_t \phi - \mathbf A^{\text{cf}} \cdot \nabla \phi \right)
  - \mathbf A^{\text{df}} \cdot \nabla \phi - \angles{\nabla}^{-2}\mathbf A \cdot \nabla\phi
  \equiv \mathfrak B_1 - \mathfrak B_2 - \mathfrak B_3,
\end{equation}
where $\mathfrak B_2 = \mathbf A^{\text{df}} \cdot \nabla \phi$ was shown in \cite{km1994} to be a null form:
\begin{equation}\label{P2}
  \mathfrak B_2 = R_2\psi \partial_1\phi - R_1\psi \partial_2\phi,
  \qquad
  \text{where $\psi = R_1A_2-R_2A_1$}.
\end{equation}
In \cite{st2010} we found that $\mathfrak B_1$ also has a null structure: By the Lorenz condition \eqref{LG} we have $R_1A_1+R_2A_2 = \angles{\nabla}^{-1}\partial_t A_0$, hence
\begin{equation}\label{CurlFree}
  A^{\text{cf}}_j = - R_j(R_1A_1+R_2A_2) = - iR_j(A_{0,+}-A_{0,-}),
\end{equation}
where we also used $\partial_t A_0 = i\angles{\nabla} (A_{0,+}-A_{0,-})$. Thus, $\mathfrak B_1 = A_0 \partial_t \phi + A^{\text{cf}}_j \partial^j \phi$ becomes
\begin{equation}\label{P1}
\begin{aligned}
  \mathfrak B_1 &= (A_{0,+} + A_{0,-}) i\angles{\nabla} (\phi_+ - \phi_-)
  - iR_j(A_{0,+}-A_{0,-})\partial^j (\phi_+ + \phi_-)
  \\
  &= i\sum_{\pm_1,\pm_2} \left( A_{0,\pm_1} \angles{\nabla} (\pm_2\phi_{\pm_2})
  - R_j(\pm_1 A_{0,\pm_1})\partial^j\phi_{\pm_2} \right),
\end{aligned}
\end{equation}
where we used $\partial_t \phi = i\angles{\nabla}(\phi_+ - \phi_-)$.

Taking into account \eqref{P2} and \eqref{P1}, we rewrite \eqref{SplitWaves} as
\begin{subequations}\label{ModifiedSplitWaves}
\begin{gather}
  \label{ModifiedSplitWaveA}
  \left(i\partial_t \pm \angles{\nabla} \right) A_{\mu,\pm} = \pm 2^{-1}\angles{\nabla}^{-1} \mathfrak M_\mu(A_+,A_-,\phi_+,\phi_-),
  \\
  \label{LG'}
  \partial^\mu A_\mu = 0,
  \\
  \label{ModifiedSplitWaveB}
  \left(i\partial_t \pm \angles{\nabla} \right) \phi_\pm = \pm 2^{-1}\angles{\nabla}^{-1} \mathfrak N(A_+,A_-,\phi_+,\phi_-),
\end{gather}
\end{subequations}
where
\begin{gather*}
  \mathfrak M_\mu(A_+,A_-,\phi_+,\phi_-)
  =
  - \epsilon_{\mu\nu\rho} \im Q^{\nu\rho}(\partial\overline{\phi},\partial\phi) + 2\epsilon_{\mu\nu\rho} \partial^\nu\left( A^\rho \abs{\phi}^2 \right) + A_\mu,
  \\
  \mathfrak N(A_+,A_-,\phi_+,\phi_-)
  =
  2i (\mathfrak B_1 - \mathfrak B_2 - \mathfrak B_3) + A_\mu A^\mu \phi - \phi V'\left( \abs{\phi}^2 \right) + \phi,
\end{gather*}
with $\mathfrak B_1$ and $\mathfrak B_2$ given by \eqref{P1} and \eqref{P2}, and $\mathfrak B_3 = \angles{\nabla}^{-2}A \cdot\phi$. Here it is understood that $A_\mu = A_{\mu,+} + A_{\mu,-}$, $\phi = \phi_+ + \phi_-$, $\partial_t A_\mu = i\angles{\nabla}(A_{\mu,+} - A_{\mu,-})$, and $\partial_t \phi = i\angles{\nabla}(\phi_+ - \phi_-)$.

The initial data are
\begin{equation}\label{SplitData}
\begin{gathered}
  A_{\mu,\pm}(0) = \frac12 \left( A_\mu(0) \pm i^{-1}\angles{\nabla}^{-1} \partial_t A_\mu(0) \right) \in H^s,
  \\
  \phi_{\pm}(0) = \frac12 \left( \phi(0) \pm i^{-1} \angles{\nabla}^{-1}\partial_t \phi(0) \right) \in H^{s+1/2}.
\end{gathered}
\end{equation}

The systems \eqref{ModifiedSplitWaves} and \eqref{Waves} are equivalent via the transformation \eqref{Splitting}, so it suffices to solve \eqref{ModifiedSplitWaves}. The Lorenz condition \eqref{LG'} reduces to an initial constraint, since if $(A_+,A_-,\phi_+,\phi_-)$ is a solution of \eqref{ModifiedSplitWaves}, then setting $A = A_+ + A_-$ and $\phi = \phi_+ + \phi_-$ we have $(\square + 1)A_\mu = \mathcal M_\mu$, so \eqref{WaveA} is satisfied, i.e., $\square A_\mu = -\epsilon_{\mu\nu\rho} \partial^\nu J^\rho$. Thus, $u=\partial^\mu A_\mu$ satisfies $\square u = 0$, and $u(0) = \partial_t u(0) = 0$ by \eqref{LorenzConstraint} and \eqref{ConstraintB}.

We prove local well-posedness of \eqref{ModifiedSplitWaves} by iterating in the $X^{s,b}$-spaces adapted to the operators $i\partial_t \pm \angles{\nabla}$, so by standard arguments (see, e.g., \cite{st2010} for more details) the proof of Theorem \ref{Thm3} reduces to proving, for some $b,b' \in (1/2,1)$, $m \ge 2$, and $\varepsilon > 0$, the estimates
\begin{gather}
  \label{NonlinearA}
  \norm{\mathfrak M(A_+,A_-,\phi_+,\phi_-)}_{X_\pm^{s-1,b-1+\varepsilon}} \lesssim B+B^m,
  \\
  \label{NonlinearB}
  \norm{\mathfrak N(A_+,A_-,\phi_+,\phi_-)}_{X_\pm^{s-1/2,b'-1+\varepsilon}} \lesssim B+B^m,
\end{gather}
where
$$
  B = \norm{\phi_+}_{X_+^{s+1/2,b'}} + \norm{\phi_-}_{X_-^{s+1/2,b'}}
  + \sum_{\mu=0}^2 \left( \norm{A_{\mu,+}}_{X_+^{s,b}} + \norm{A_{\mu,-}}_{X_-^{s,b}} \right)
$$
and
$$
  \norm{u}_{X_\pm^{s,b}} = \norm{ \angles{\xi}^s \angles{-\tau\pm\abs{\xi}}^b \widehat u(\tau,\xi)}_{L^2_{\tau,\xi}}.
$$
Here $\widehat u(\tau,\xi)$ is the space-time Fourier transform of $u(t,x)$. Note that $\angles{-\tau\pm\abs{\xi}}$ is comparable to $\angles{-\tau\pm\angles{\xi}}$.

We also need the wave-Sobolev norms
$$
  \norm{u}_{H^{s,b}} = \norm{ \angles{\xi}^s \angles{\abs{\tau}-\abs{\xi}}^b \widehat u(\tau,\xi)}_{L^2_{\tau,\xi}}.
$$
Frequent use will be made of the fact that $\norm{u}_{X_\pm^{a,\alpha}} \le \norm{u}_{H^{a,\alpha}}$ if $\alpha \le 0$, and that the reverse inequality holds if $\alpha \ge 0$. In particular, it suffices to prove \eqref{NonlinearA} and \eqref{NonlinearB} with the $X$-norms on the left hand sides replaced by the corresponding $H$-norms. 

\subsection{Proof of \eqref{NonlinearA} for $\mathfrak M_{\mu,1} = - \epsilon_{\mu\nu\rho} \im Q^{\nu\rho}(\partial\overline{\phi},\partial\phi)$} We shall prove that
\begin{equation}\label{M1estimate}
  \norm{Q^{\nu\rho}(\partial\overline{\phi},\partial\phi)}_{H^{s-1,b-1+\varepsilon}}
  \lesssim \norm{\phi_+}_{X_+^{s+1/2,b'}}^2 + \norm{\phi_-}_{X_-^{s+1/2,b'}}^2
\end{equation}
holds if
\begin{equation}\label{Conditions1}
  \frac12 < b,b' < 1, \qquad s > \max\left(b-\frac12, \frac14, \frac16+\frac{b}{3}, \frac{b}{2} \right),
\end{equation}
and $\varepsilon > 0$ is sufficiently small.

Observe that
\begin{gather*}
  Q_{jk}(\partial\overline\phi,\partial\phi) = \sum_{\pm_1,\pm_2} \left( \partial_j\overline{\phi_{\pm_1}} \partial_k\phi_{\pm_2} - \partial_k\overline{\phi_{\pm_1}} \partial_j\phi_{\pm_2} \right),
  \\
  Q_{0j}(\partial\overline\phi,\partial\phi) = \sum_{\pm_1,\pm_2} \left( -i\angles{\nabla}\left(\pm_1\overline{\phi_{\pm_1}}\right) \partial_j\phi_{\pm_2} - \partial_j\overline{\phi_{\pm_1}} i\angles{\nabla} \left(\pm_2\phi_{\pm_2}\right) \right),
\end{gather*}
where we used $\partial_t \phi = i\angles{\nabla}(\phi_+ - \phi_-)$. Since $\norm{\overline{u}}_{X_\pm^{s,b}} = \norm{u}_{X_{\mp}^{s,b}}$, it suffices to show
\begin{gather}
  \label{NullFormA}
  \norm{\partial_j u \partial_k v - \partial_k u \partial_j v}_{H^{s-1,b-1+\varepsilon}} \lesssim \norm{u}_{X_{\pm_1}^{s+1/2,b'}} \norm{v}_{X_{\pm_2}^{s+1/2,b'}},
  \\
  \label{NullFormB}
  \norm{\partial_j (\pm_1 u) \angles{\nabla}v - \angles{\nabla}u \partial_j (\pm_2 v)}_{H^{s-1,b-1+\varepsilon}} \lesssim \norm{u}_{X_{\pm_1}^{s+1/2,b'}} \norm{v}_{X_{\pm_2}^{s+1/2,b'}}.
\end{gather}
The left hand sides are bounded by $\norm{I(\tau,\xi)}_{L^2_{\tau,\xi}}$, where
\begin{equation}\label{Idef}
  I(\tau,\xi)
  =
  \int_{\R^{1+2}} \frac{\sigma\left(\pm_1\eta,\pm_2(\xi-\eta)\right)}{\angles{\xi}^{1-s} \angles{\abs{\tau}-\abs{\xi}}^{1-b-\varepsilon}}
  \abs{ \widehat{u}(\lambda,\eta) } \abs{ \widehat{v}(\tau-\lambda,\xi-\eta) } \, d\lambda \, d\eta
\end{equation}
and $\sigma$ is either
$$
  \sigma(\eta,\zeta) = \abs{\eta \times \zeta} \le \abs{\eta}\abs{\zeta} \theta(\eta,\zeta)
$$
or
$$
  \sigma(\eta,\zeta) = \abs{\angles{\eta}\zeta_j - \eta_j\angles{\zeta}}
   \le \abs{\eta}\abs{\zeta} \theta(\eta,\zeta)
  + \frac{\abs{\zeta}}{\angles{\eta}}
  + \frac{\abs{\eta}}{\angles{\zeta}}.
$$
Here $\theta(\eta,\zeta)$ denotes the angle between nonzero vectors $\eta,\zeta \in \R^2$.

We now use the following estimate from \cite{st2010}:

\begin{lemma}\label{AngleLemma}
For all signs $(\pm_1,\pm_2)$, all $\lambda,\mu \in \R$, and all nonzero $\eta,\zeta \in \R^2$,
$$
  \theta(\pm_1\eta,\pm_2\zeta)
  \lesssim
  \left(
  \frac{\angles{\abs{\lambda+\mu}-\abs{\eta+\zeta}} + \angles{-\lambda\pm_1\abs{\eta}}
  + \angles{-\mu\pm_2\abs{\zeta}} }
  {\min(\angles{\eta},\angles{\zeta})}
  \right)^{1/2}.
$$
\end{lemma}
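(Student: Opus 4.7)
The plan is to derive the angle bound from the elementary identity relating $|\eta'+\zeta'|$ to the angle between $\eta' = \pm_1 \eta$ and $\zeta' = \pm_2 \zeta$, and to connect $|\eta'+\zeta'|$ to $|\eta+\zeta|$ via the three modulation weights in the numerator.

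First I would reduce to the nontrivial regime: WLOG assume $\angles{\eta} \le \angles{\zeta}$, so $\min(\angles{\eta}, \angles{\zeta}) = \angles{\eta}$. Since the right-hand side of the asserted bound is always $\gtrsim 1$ (each $\angles{\cdot}$ in the numerator is $\ge 1$, while $\angles{\eta} \ge 1$ is at most a constant when $|\eta| \lesssim 1$), and since $\theta \le \pi$ is always bounded, I may assume $|\eta| \gtrsim 1$, hence $|\zeta| \gtrsim 1$ as well, and replace $\angles{\eta}, \angles{\zeta}$ by $|\eta|, |\zeta|$ in the denominator at the cost of harmless constants.

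Next, combine the three modulation bounds. Writing $\alpha = \angles{|\lambda+\mu|-|\eta+\zeta|}$, $\beta_1 = \angles{-\lambda \pm_1 |\eta|}$, $\beta_2 = \angles{-\mu \pm_2 |\zeta|}$, the triangle inequality on the real line gives $|\lambda+\mu - (\pm_1|\eta| \pm_2 |\zeta|)| \le \beta_1 + \beta_2$, and then $\bigl| |\lambda+\mu| - |\pm_1 |\eta| \pm_2 |\zeta|| \bigr| \le \beta_1 + \beta_2$. Combined with $\bigl| |\lambda+\mu| - |\eta+\zeta| \bigr| \le \alpha$, this yields the key inequality
$$
  \bigl| |\pm_1|\eta| \pm_2 |\zeta|| - |\eta+\zeta| \bigr| \le \alpha + \beta_1 + \beta_2.
$$

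Now split on whether the signs agree. If $\pm_1 = \pm_2$, then $|\pm_1 |\eta| \pm_2 |\zeta|| = |\eta|+|\zeta|$ and $\theta(\pm_1\eta, \pm_2\zeta) = \theta(\eta,\zeta)$, so the identity $(|\eta|+|\zeta|)^2 - |\eta+\zeta|^2 = 2|\eta||\zeta|(1-\cos\theta)$ together with $a^2-b^2 = (a+b)(a-b)$ yields
$$
  2|\eta||\zeta|(1-\cos\theta) \le 2(|\eta|+|\zeta|)(\alpha+\beta_1+\beta_2),
$$
and using $1 - \cos\theta \gtrsim \theta^2$ one gets $\theta^2 \lesssim (\alpha+\beta_1+\beta_2)/\min(|\eta|,|\zeta|)$. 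If $\pm_1 \ne \pm_2$, then $|\pm_1|\eta| \pm_2|\zeta|| = ||\eta|-|\zeta||$ and $\theta(\pm_1\eta, \pm_2\zeta) = \pi - \theta(\eta,\zeta)$; here I would use the algebraic identity
$$
  (|\eta|+|\zeta|)^2 - |\eta-\zeta|^2 = |\eta+\zeta|^2 - (|\eta|-|\zeta|)^2 = (|\eta+\zeta|-||\eta|-|\zeta||)(|\eta+\zeta|+||\eta|-|\zeta||),
$$
where $|\eta+\zeta| \ge ||\eta|-|\zeta||$ (since $|\eta+\zeta|^2 - (|\eta|-|\zeta|)^2 = 2|\eta||\zeta|(1+\cos\theta(\eta,\zeta)) \ge 0$), so the first factor is nonnegative and bounded by $\alpha + \beta_1 + \beta_2$ via the key inequality above. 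The same lower bound $(|\eta|+|\zeta|)^2 - |\eta-\zeta|^2 = 2|\eta||\zeta|(1-\cos\theta(\pm_1\eta,\pm_2\zeta)) \gtrsim |\eta||\zeta|\theta^2$ closes out the argument.

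The step I expect to require the most care is the mixed-sign case, because there the cancellation between $|\eta|+|\zeta|$ and $|\eta-\zeta|$ is not directly controlled by the modulation weights (which naturally control $||\eta|-|\zeta||$ versus $|\eta+\zeta|$); the saving comes from the algebraic identity swapping $(|\eta|+|\zeta|)^2 - |\eta-\zeta|^2$ for $|\eta+\zeta|^2 - (|\eta|-|\zeta|)^2$. Everything else is routine triangle-inequality bookkeeping plus the elementary estimate $1-\cos\theta \gtrsim \theta^2$.
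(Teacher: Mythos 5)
Your proof is correct, and it follows essentially the standard argument: the paper itself does not prove Lemma \ref{AngleLemma} but imports it from the cited reference \cite{st2010}, where the proof proceeds exactly along your lines --- combine the three modulation weights by the triangle inequality to control $\bigl| \,\fixedabs{\pm_1\fixedabs{\eta}\pm_2\fixedabs{\zeta}} - \fixedabs{\eta+\zeta}\, \bigr|$, then convert this to an angle bound via the cosine-rule identities $(\fixedabs{\eta}+\fixedabs{\zeta})^2-\fixedabs{\eta+\zeta}^2 = 2\fixedabs{\eta}\fixedabs{\zeta}(1-\cos\theta)$ in the equal-sign case and $\fixedabs{\eta+\zeta}^2-(\fixedabs{\eta}-\fixedabs{\zeta})^2 = 2\fixedabs{\eta}\fixedabs{\zeta}(1+\cos\theta)$ in the mixed-sign case, together with $1-\cos\theta\gtrsim\theta^2$ and the harmless low-frequency reduction. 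Your handling of the mixed-sign case (swapping $(\fixedabs{\eta}+\fixedabs{\zeta})^2-\fixedabs{\eta-\zeta}^2$ for $\fixedabs{\eta+\zeta}^2-(\fixedabs{\eta}-\fixedabs{\zeta})^2$ and using $\theta(\pm_1\eta,\pm_2\zeta)=\pi-\theta(\eta,\zeta)$) is exactly the right mechanism and is sound.
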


Thus we reduce \eqref{NullFormA} and \eqref{NullFormB} to (recalling $\norm{u}_{H^{a,\alpha}} \le \norm{u}_{X_\pm^{a,\alpha}}$, $\alpha \ge 0$)
\begin{align*}
  \norm{uv}_{H^{s-1,b-1/2+\varepsilon}} &\lesssim \norm{u}_{H^{s,b'}} \norm{v}_{H^{s-1/2,b'}},
  \\
  \norm{uv}_{H^{s-1,b-1+\varepsilon}} &\lesssim \norm{u}_{H^{s,b'-1/2}} \norm{v}_{H^{s-1/2,b'}},
  \\
  \norm{uv}_{H^{s-1,b-1+\varepsilon}} &\lesssim \norm{u}_{H^{s,b'}} \norm{v}_{H^{s-1/2,b'-1/2}},
  \\
  \norm{uv}_{H^{s-1,0}} &\lesssim \norm{u}_{H^{s+3/2,b'}} \norm{v}_{H^{s-1/2,b'}}.
\end{align*}
Assuming \eqref{Conditions1}, all these estimates hold by the following product law.

\begin{theorem}\label{Thm4} \emph{(D'Ancona, Foschi and Selberg \cite{dfs2010}.)}
Let $s_0,s_1,s_2,b_0,b_1,b_2 \in \R$. The product estimate in $1+2$ dimensions,
$$
  \norm{uv}_{H^{-s_0,-b_0}} \le C \norm{u}_{H^{s_1,b_1}}\norm{v}_{H^{s_2,b_2}},
$$
holds for all $u,v \in \mathcal S(\R^{1+2})$ if the following conditions are satisfied:
\begin{align*}
  & b_0 + b_1 + b_2 > \frac12,
  \\
  & b_0 + b_1 \ge 0,
  \\
  & b_0 + b_2 \ge 0,
  \\
  & b_1 + b_2 \ge 0,
  \\
  & s_0 + s_1 + s_2 > \frac32 - (b_0 + b_1 + b_2),
  \\
  & s_0 + s_1 + s_2 > 1 - \min(b_0 + b_1, b_0 + b_2, b_1 + b_2),
  \\
  & s_0 + s_1 + s_2 > \frac12 - \min(b_0,b_1,b_2),
  \\
  & s_0 + s_1 + s_2 > \frac34,
  \\
  & (s_0 + b_0) + 2s_1+ 2s_2 > 1,
  \\
  & 2s_0 + (s_1 + b_1) + 2s_2 > 1,
  \\
  & 2s_0 + 2s_1 + (s_2 + b_2) > 1,
  \\
  & s_1 + s_2 \ge \max(0,-b_0),
  \\
  & s_0 + s_2 \ge \max(0,-b_1),
  \\
  & s_0 + s_1 \ge \max(0,-b_2).
\end{align*}
\end{theorem}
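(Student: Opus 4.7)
My plan is to follow the dyadic framework standard for wave-Sobolev product estimates: dualize, decompose each factor dyadically in frequency and modulation, and sum a collection of bilinear $L^2$ estimates coming from the geometry of intersecting thickened cones. By duality it suffices to bound the trilinear form $\int uvw\,dt\,dx$ over unit vectors in the three weighted $L^2$ spaces. Writing each function as a sum of dyadic pieces $P_{N,L}u$ localized to $\angles{\xi} \sim N$ and $\angles{\fixedabs{\tau}-\fixedabs{\xi}} \sim L$, Plancherel reduces the problem to a weighted six-fold sum over $(N_0,N_1,N_2,L_0,L_1,L_2)$ of integrals $\int (P_{N_0,L_0}w)(P_{N_1,L_1}u)(P_{N_2,L_2}v)\,dt\,dx$. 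Convolution support forces the two largest $N_i$ to be comparable, and likewise for the modulations.

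The main tool on each dyadic piece is the bilinear $L^2$ estimate of Klainerman--Tataru and Foschi--Klainerman type in $1+2$ dimensions,
\begin{equation*}
  \norm{(P_{N_1,L_1}u)(P_{N_2,L_2}v)}_{L^2_{t,x}}
  \lesssim C(N_0,N_1,N_2,L_0,L_1,L_2)\, \norm{u}_{L^2} \norm{v}_{L^2},
\end{equation*}
where $C$ measures the area of intersection of two thickened cones and depends on the interaction type (high-high to low, high-low to high, etc.). A typical high-high to low bound gives $C \sim (\Nmin \Lmin \Lmed/\Nmax)^{1/2}$, with sharpenings via angular decomposition available. Substituting this into the six-fold sum together with the Sobolev weights $N_i^{-s_i}$ and modulation weights $L_i^{-b_i}$ leaves geometric series in each dyadic index that must converge.

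A case analysis in the orderings of the $N_i$ and $L_i$ then yields a set of linear inequalities in the $s_i,b_i$ required for summability, and collecting these over all cases produces precisely the list in the theorem. The condition $s_0+s_1+s_2 > 3/2-(b_0+b_1+b_2)$ reflects the total $(2+1)$-dimensional scaling count; the conditions involving $\min(b_i+b_j)$ and $\min(b_i)$ correspond to regimes where two or only one of the modulations dominate; and the terminal conditions of the form $(s_0+b_0)+2s_1+2s_2 > 1$ are forced by Knapp-type concentration examples on thin caps of the cone. The hardest case is the triple-resonant, low-modulation regime $L_0 \sim L_1 \sim L_2 \ll \Nmin$, in which all three factors essentially behave as free waves and the estimate reduces to the sharp $L^4$ bilinear Strichartz inequality on the $2+1$-dimensional light cone, a question in classical Fourier restriction theory; this is where both the combinatorics of the condition list is fixed and the sharpness of the individual conditions can be verified.
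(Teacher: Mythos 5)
A point of order first: the paper does not prove this theorem at all. It is quoted verbatim, with attribution, from D'Ancona--Foschi--Selberg \cite{dfs2010} and is used as a black box in Sections \ref{LWP} and \ref{EnergyLWP}, so the only meaningful comparison is with the proof in that reference. Your outline does match the broad strategy used there: dualize, decompose dyadically in frequency $\angles{\xi}\sim N_i$ and modulation $\angles{\abs{\tau}-\abs{\xi}}\sim L_i$, estimate each dyadic block by bilinear $L^2$ (restriction-type) estimates, and sum.

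As a proof, however, your proposal has genuine gaps. First, the entire content of the theorem is the specific list of fourteen conditions, and your sketch obtains it by fiat: ``collecting these over all cases produces precisely the list in the theorem'' asserts the conclusion rather than deriving it; in \cite{dfs2010} this case analysis, together with the precise dyadic estimates it requires, occupies essentially the whole paper. Your appeal to Knapp examples cannot fill this in, since concentration examples prove \emph{necessity} of such conditions, while the theorem is a \emph{sufficiency} statement. Second, there is a concrete false step: the convolution constraint $\xi_0=\xi_1+\xi_2$ does force the two largest $N_i$ to be comparable, but the analogous claim for the modulations is wrong. Two free waves with $L_1\sim L_2\sim 1$ and nearly antiparallel frequencies of size $\sim N$ produce an output with $\bigabs{\abs{\tau_0}-\abs{\xi_0}}\sim N$, so the two largest $L_i$ can be wildly incomparable. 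What is true, and what the actual proof runs on, is the resonance identity: $\Lmax$ controls a quantity determined by the frequencies \emph{and by the sign configuration} of $\tau_1,\tau_2$ (for equal signs a high-high-to-low interaction forces $\Lmax\gtrsim\Nmax$, while for opposite signs it does not). Since the $H^{s,b}$ norms do not distinguish the two sheets of the cone, this sign-dependent bookkeeping is unavoidable, and it is exactly what governs the ``triple-resonant'' regime you yourself identify as the hardest; if one replaces it by your comparability claim, the dyadic summation changes and the resulting condition list would not be the one stated.
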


We remark that this product law is optimal up to endpoint cases. A more precise statement, including many endpoint cases, can be found in \cite{dfs2010}.

\subsection{Proof of \eqref{NonlinearA} for $\mathfrak M_{\mu,2} = 2\epsilon_{\mu\nu\rho} \partial^\nu\left( A^\rho \abs{\phi}^2 \right)$} By Leibniz's rule and using $\partial_t A_j = i\angles{\nabla}(A_{j,+}-A_{j,-})$ and $\partial_t \phi = i\angles{\nabla}(\phi_+ - \phi_-)$, we reduce to
\begin{align}
  \label{TrilinearA}
  \norm{uvw}_{H^{s-1,b-1+\varepsilon}} &\lesssim \norm{u}_{H^{s-1,b}} \norm{v}_{H^{s+1/2,b'}} \norm{w}_{H^{s+1/2,b'}},
  \\
  \label{TrilinearB}
  \norm{uvw}_{H^{s-1,b-1+\varepsilon}} &\lesssim \norm{u}_{H^{s,b}} \norm{v}_{H^{s+1/2,b'}} \norm{w}_{H^{s-1/2,b'}}.
\end{align}

But \eqref{TrilinearA} follows by two applications of Theorem \ref{Thm4}:
\begin{align*}
  \norm{uvw}_{H^{s-1,b-1+\varepsilon}} &\lesssim \norm{u}_{H^{s-1,b}} \norm{vw}_{H^{s+1/2,b-1/2}}
  \\
  &\lesssim
  \norm{u}_{H^{s-1,b}} \norm{v}_{H^{s+1/2,b'}} \norm{w}_{H^{s-1/2,b'}},
\end{align*}
provided that
\begin{equation}\label{Conditions1b}
  \frac12 < b,b' < 1, \qquad s > \max\left(1-b, b-\frac12, \frac14, \frac{b}{3} \right),
\end{equation}
and $\varepsilon > 0$ is sufficiently small. Moreover, assuming only $s > \max(b-1/2,1/4)$,
\begin{align*}
  \norm{uvw}_{H^{s-1,b-1+\varepsilon}} &\lesssim \norm{uv}_{H^{s,0}} \norm{w}_{H^{s-1/2,b'}}
  \\
  &\lesssim
  \norm{u}_{H^{s,b}} \norm{v}_{H^{s+1/2,b'}} \norm{w}_{H^{s-1/2,b'}},
\end{align*}
so \eqref{Conditions1b} is more than sufficient for \eqref{TrilinearB} to hold also.

\subsection{Proof of \eqref{NonlinearA} for $\mathfrak M_{\mu,3} = A_\mu$} Trivially,
$$
  \norm{A_\mu}_{H^{s-1,b-1+\varepsilon}} \le \norm{A_\mu}_{H^{s,b}} \le \sum_{\pm} \norm{A_{\mu,\pm}}_{X_\pm^{s,b}}.
$$

\subsection{Proof of \eqref{NonlinearB} for $\mathfrak N_1 = 2i(\mathfrak B_1-\mathfrak B_2-\mathfrak B_3)$} We need
\begin{equation}\label{Pestimate}
  \norm{\mathfrak B_j}_{H^{s-1/2,b'-1+\varepsilon}} \lesssim \left(\sum_\mu\sum_{\pm}\norm{A_{\mu,\pm}}_{X_\pm^{s,b}}\right) \left(\sum_{\pm}\norm{\phi_\pm}_{X_\pm^{s+1/2,b'}}\right)
\end{equation}
for $j=1,2,3$. For $\mathfrak B_2$ given by \eqref{P2} we reduce to
$$
  \norm{R_1 u \partial_2 v - R_2 u \partial_1 v}_{H^{s-1/2,b'-1+\varepsilon}} \lesssim \norm{u}_{X_{\pm_1}^{s,b}} \norm{v}_{X_{\pm_2}^{s+1/2,b'}},
$$
and proceeding as we did for \eqref{NullFormA}, we further reduce to
\begin{equation}\label{Products2}
\begin{aligned}
  \norm{uv}_{H^{s-1/2,b'-1/2+\varepsilon}} &\lesssim \norm{u}_{H^{s,b}} \norm{v}_{H^{s,b'}} ,
  \\
  \norm{uv}_{H^{s-1/2,b'-1/2+\varepsilon}} &\lesssim \norm{u}_{H^{s+1/2,b}} \norm{v}_{H^{s-1/2,b'}},
  \\
  \norm{uv}_{H^{s-1/2,b'-1+\varepsilon}} &\lesssim \norm{u}_{H^{s,b}} \norm{v}_{H^{s,b'-1/2}},
  \\
  \norm{uv}_{H^{s-1/2,b'-1+\varepsilon}} &\lesssim \norm{u}_{H^{s+1/2,b}} \norm{v}_{H^{s-1/2,b'-1/2}},
  \\
  \norm{uv}_{H^{s-1/2,b'-1+\varepsilon}} &\lesssim \norm{u}_{H^{s,b-1/2}} \norm{v}_{H^{s,b'}},
  \\
  \norm{uv}_{H^{s-1/2,b'-1+\varepsilon}} &\lesssim \norm{u}_{H^{s+1/2,b-1/2}} \norm{v}_{H^{s-1/2,b'}},
\end{aligned}
\end{equation}
all of which hold by Theorem \ref{Thm4} provided that
\begin{equation}\label{Conditions2}
  \frac12 < b,b' < 1, \qquad s > \max\left( b'-\frac12, \frac14, \frac{b'}{3}, 1-b \right),
\end{equation}
and $\varepsilon > 0$ is sufficiently small. Thus we have \eqref{Pestimate} for $\mathfrak B_2$.

For $\mathfrak B_1$ given by \eqref{P1}, the estimate \eqref{Pestimate} reduces to
$$
  \norm{u \angles{\nabla} v
  - R_j(\pm_1 u) \partial^j(\pm_2 v)}_{H^{s-1/2,b'-1+\varepsilon}}
  \lesssim \norm{u}_{X_{\pm_1}^{s,b}} \norm{v}_{X_{\pm_2}^{s+1/2,b'}}.
$$
The left hand side is bounded by $\norm{I(\tau,\xi)}_{L^2_{\tau,\xi}}$, where $I$ is given by \eqref{Idef} with
\begin{equation}\label{Symbol}
  \sigma(\eta,\zeta) = \abs{ \angles{\zeta} - \frac{\eta \cdot \zeta}{\angles{\eta}} }
  \lesssim
  \angles{\zeta} \theta^2(\eta,\zeta) + \angles{\zeta} \left( \frac{1}{\angles{\eta}^2} + \frac{1}{\angles{\zeta}^2} \right),
\end{equation}
so we reduce to \eqref{Products2} and two additional estimates:
\begin{align}
  \label{AdditionalA}
  \norm{uv}_{H^{s-1/2,0}} &\lesssim \norm{u}_{H^{s+2,b}} \norm{v}_{H^{s-1/2,b'}},
  \\
  \norm{uv}_{H^{s-1/2,0}} &\lesssim \norm{u}_{H^{s,b}} \norm{v}_{H^{s+3/2,b'}},
\end{align}
which hold by Theorem \ref{Thm4} if $b,b' > 1/4$ and $s \ge - 3/4$, so \eqref{Conditions2} is more than sufficient.

Finally, the estimate for $\mathfrak B_3 = \angles{\nabla}^{-2} \mathbf A \cdot \nabla \phi$ reduces to \eqref{AdditionalA}.

\subsection{Proof of \eqref{NonlinearB} for $\mathfrak N_2 = A_\mu A^\mu \phi$} For this we need
$$
  \norm{uvw}_{H^{s-1/2,b'-1+\varepsilon}} \lesssim \norm{u}_{H^{s,b}} \norm{v}_{H^{s,b}} \norm{w}_{H^{s+1/2,b'}}.
$$
But two applications of Theorem \ref{Thm4} give
\begin{align*}
  \norm{uvw}_{H^{s-1/2,b'-1+\varepsilon}} &\lesssim \norm{u}_{H^{s,b}} \norm{vw}_{H^{s,0}}
  \\
  &\lesssim
  \norm{u}_{H^{s,b}} \norm{v}_{H^{s,b}} \norm{w}_{H^{s+1/2,b'}},
\end{align*}
assuming \eqref{Conditions2} holds.

\subsection{Proof of \eqref{NonlinearB} for $\mathfrak N_3 = - \phi V'\left( \abs{\phi}^2 \right)$} If $V(r)$ is a polynomial, we need
\begin{equation}\label{Nproduct}
  \norm{u_1 \cdots u_N}_{H^{s-1/2,b'-1+\varepsilon}} \lesssim \prod_{j=1}^N \norm{u_j}_{H^{s+1/2,b'}}.
\end{equation}
Writing
$$
  \sigma = s-\frac12+\theta, \qquad \beta = b'-1+\varepsilon+\theta, \qquad 0 \le \theta \le 1-\delta-\varepsilon,
$$
the estimate
\begin{equation}\label{StepEstimate}
  \norm{uv}_{H^{\sigma,\beta}} \lesssim \norm{u}_{H^{s+1/2,b'}} \norm{v}_{H^{\sigma+\delta,\beta+\delta}}
\end{equation}
holds by Theorem \ref{Thm4} if
\begin{equation}\label{Conditions3}
  b' > 1/2, \qquad \delta > 0, \qquad s > \max\left( b'-\frac12, b'-2\delta, \frac14-\delta \right).
\end{equation}
Applying \eqref{StepEstimate} inductively, we get \eqref{Nproduct} provided $(N-1)\delta < 1$. 

\subsection{Proof of \eqref{NonlinearB} for $\mathfrak N_4 = \phi$} Trivially,
$$
  \norm{\phi}_{H^{s-1/2,b'-1+\varepsilon}} \le \norm{\phi}_{H^{s+1/2,b'}} \le \sum_{\pm} \norm{\phi_\pm}_{X_\pm^{s+1/2,b'}}.
$$

\subsection{Conclusion of the proof: The choice of $s,b,b'$} We have proved that \eqref{NonlinearA} and \eqref{NonlinearB} hold under the conditions \eqref{Conditions1}, \eqref{Conditions1b}, \eqref{Conditions2} and \eqref{Conditions3}, where $\varepsilon > 0$ is arbitrarily small. The optimal choice for $b'$ is obviously $b'=1/2+\varepsilon$, and then the condition $s > b'- 2\delta$ from \eqref{Conditions3} is satisfied if we set
$$
  \delta =
  \begin{cases}
  \varepsilon + \frac{1/2-s}{2} &\text{if $s < 1/2$},
  \\
  \varepsilon &\text{if $s \ge 1/2$}.
  \end{cases}
$$

The condition $(N-1)\delta < 1$ for \eqref{Nproduct} to hold then becomes $N < 1 + 4/(1-2s)$ if $s < 1/2$, whereas $N$ is unrestricted if $s \ge 1/2$. For the degree $n$ of the polynomial $V(r)$, this gives $n < 1 + 2/(1-2s)$ if $s < 1/2$, and no restriction if $s \ge 1/2$.

We are left with the conditions $b \in (1/2,1)$ and $s > 1/4,b-1/2,1/6+b/3,1-b$, and optimizing this leads to the conditions $b=5/8$ and $s > 3/8$.

\section{Local well-posedness for finite-energy data}\label{EnergyLWP}

Here we prove Theorem \ref{Thm1}, or rather the following equivalent statement:

\begin{theorem}\label{Thm4}
If $s=1/2$, the analogue of Theorem \ref{Thm3} holds with the data space $H^{1/2} \times H^{-1/2}$ for $(A_\mu,\partial_t A_\mu)$ replaced by its homogeneous counterpart $\dot H^{1/2} \times \dot H^{-1/2}$, and we allow any potential $V \in C^\infty(\R_+;\R)$ such that $V(0) = 0$ and all derivatives of $V$ have polynomial growth.
\end{theorem}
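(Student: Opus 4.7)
The proof closely follows that of Theorem \ref{Thm3} with $s = 1/2$. There are two new ingredients: the homogeneous data space $\dot H^{1/2} \times \dot H^{-1/2}$ for $(A_\mu,\partial_t A_\mu)$, and the extension from polynomial to smooth potentials $V$ with polynomial-growth derivatives.

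To handle the homogeneous data space, the plan is to absorb the low-frequency part of $A_\mu$ into a smooth background. Split $A_\mu(0) = A_\mu^{\mathrm{low}}(0) + A_\mu^{\mathrm{high}}(0)$ (and likewise $\partial_t A_\mu(0)$) via a Littlewood--Paley cutoff at frequency $1$; on the support of the high-frequency piece $\abs{\xi} \sim \angles{\xi}$, so the pair $(A_\mu^{\mathrm{high}}(0),\partial_t A_\mu^{\mathrm{high}}(0)) \in H^{1/2} \times H^{-1/2}$ with norms controlled by the homogeneous data norm. Let $\psi_\mu$ solve the free wave equation $\square\psi_\mu = 0$ with data $(A_\mu^{\mathrm{low}}(0),\partial_t A_\mu^{\mathrm{low}}(0))$. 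Its space-time Fourier transform is supported on the bounded set $\{\tau = \pm\abs{\xi}\} \cap \{\abs{\xi} \le 1\}$, and a Cauchy--Schwarz computation using $\fixednorm{\abs{\xi}^{-1/2}}_{L^2(\abs{\xi}\le 1)} < \infty$ yields
$$
  \norm{\partial_t^k \partial_x^\alpha \psi_\mu(t)}_{L^\infty_x} \lesssim (1+\abs{t})\bigl(\norm{A_\mu(0)}_{\dot H^{1/2}} + \norm{\partial_t A_\mu(0)}_{\dot H^{-1/2}}\bigr)
$$
for every $k \ge 0$ and every spatial multi-index $\alpha$.

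Setting $A_\mu = \psi_\mu + a_\mu$ in \eqref{Waves} and using $(\square + 1)\psi_\mu = \psi_\mu$, the new unknown $a_\mu$ satisfies $(\square+1)a_\mu$ equal to the right-hand side of \eqref{WaveA} with $A$ replaced by $\psi + a$, and with data $(a_\mu(0),\partial_t a_\mu(0)) \in H^{1/2} \times H^{-1/2}$. I then iterate $(a_\mu,\phi)$ in the $X^{s,b}$-spaces of Section \ref{LWP} with $s=1/2$: the terms involving only $a$ and $\phi$ are treated verbatim by the estimates of Section \ref{LWP}, while the multilinear terms containing at least one factor of $\psi$ reduce to brute-force H\"older bounds, since multiplication by any $\partial_t^k\partial_x^\alpha \psi \in L^\infty_{t,x}$ is bounded on every relevant $X_\pm^{s,b}$ (the angular gain from Lemma \ref{AngleLemma} is unnecessary in these subcases, thanks to the compact spatial-Fourier support of $\psi$).

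For the second new ingredient, at $s=1/2$ the Higgs field lies in $C([-T,T];H^1) \subset C([-T,T];L^p)$ for every $2 \le p < \infty$. Since $V'$ has polynomial growth, $V'(\abs{\phi}^2) \in L^\infty([-T,T];L^p)$ for every finite $p$, hence $\phi V'(\abs{\phi}^2) \in L^\infty([-T,T];L^2) \hookrightarrow H^{0,b'-1+\varepsilon}$ on $[-T,T]$; a mean-value argument together with Sobolev embedding yields the matching Lipschitz bound needed to close the contraction, with no polynomial-degree restriction. The main obstacle is the bookkeeping of the previous paragraph: one must verify systematically that each bilinear and trilinear estimate of Section \ref{LWP} remains valid when one or more arguments are replaced by $\psi$ or one of its derivatives, but the compact spatial Fourier support of $\psi$ reduces each such check to an elementary H\"older estimate.
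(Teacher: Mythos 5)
Your route is genuinely different from the paper's. The paper (Section \ref{EnergyLWP}) never subtracts a free-wave background: it keeps the full $A_\mu$ as the unknown, switches to the homogeneous splitting $2A_{\mu,\pm} = A_\mu \pm i^{-1}\abs{\nabla}^{-1}\partial_t A_\mu$ (dropping the ``$+1$'' in the $A$-equation, using homogeneous Riesz transforms, so that $\mathfrak B_3$ disappears), iterates in the norms $\norm{\abs{\nabla}^{1/2}A_{\mu,\pm}}_{X_\pm^{0,b}}$, and performs the low/high frequency splitting \emph{inside each nonlinear estimate}, handling low frequencies by the elementary bounds \eqref{LF1}--\eqref{LF2} plus H\"older/Sobolev and high frequencies by the Section \ref{LWP} estimates (see \eqref{HomNonlinearA}--\eqref{HomNonlinearB}). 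You instead freeze the low-frequency part of the \emph{data} into a smooth free wave $\psi_\mu$ and iterate the remainder $a_\mu$ in the inhomogeneous spaces of Section \ref{LWP} at $s=1/2$. Your $L^\infty$ bounds for $\psi$ are correct (indeed they hold uniformly in $t$, since $\abs{\sin(t\abs{\xi})}\abs{\xi}^{-1}\le\abs{\xi}^{-1}$ and $\abs{\xi}^{-1/2}\in L^2(\abs{\xi}\le 1)$), and your treatment of the potential term is essentially the paper's. So the strategy is viable, but there are two points where your write-up has real problems.

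The substantive gap is that the splitting $A=\psi+a$ does not respect the Lorenz gauge: $\partial^\mu A_\mu=0$ does \emph{not} give $\partial^\mu a_\mu=0$. One has $\partial^\mu a_\mu=-\partial^\mu\psi_\mu$, and $\partial^\mu\psi_\mu$ is a nontrivial free wave: it vanishes at $t=0$, but its initial time derivative is the low-frequency projection of a curl of the current, generically nonzero, because the Lorenz condition is propagated by the nonlinear flow, not the free flow. Consequently the term $2i\,a_\mu\partial^\mu\phi$, which you classify as ``involving only $a$ and $\phi$'' and propose to treat \emph{verbatim} by Section \ref{LWP}, cannot be so treated: the null-structure identity \eqref{CurlFree} underlying \eqref{P1} \emph{is} the Lorenz condition \eqref{LG} for the field being decomposed, and it fails for $a$ alone. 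The repair stays inside your scheme: apply the Lorenz condition to the full field first, writing $\mathbf A^{\mathrm{cf}}_j=-R_j\angles{\nabla}^{-1}\partial_t(\psi_0+a_0)$, and only then substitute; this yields the null forms \eqref{P1}--\eqref{P2} with $a_{\mu,\pm},\phi_\pm$ in place of $A_{\mu,\pm},\phi_\pm$, plus extra terms in which derivatives of $\phi$ are multiplied by low-frequency free-wave factors such as $\psi_0$ and $R_j\angles{\nabla}^{-1}\partial_t\psi_0$, which then go into your ``brute-force'' bin. The second point is your justification for that bin: ``multiplication by an $L^\infty_{t,x}$ function is bounded on $X_\pm^{s,b}$'' is false as a principle when the second exponent is nonzero (multiplication by $e^{i\tau_0 t}$ preserves $L^\infty$ but shifts the modulation weight by $\tau_0$). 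What is true here is either (i) $\widehat{\psi}$ is a finite measure supported in $\{\abs{\tau}=\abs{\xi}\le1\}$, so convolution with it moves $(\tau,\xi)$ by $O(1)$ and all the weights $\angles{\xi}$, $\angles{-\tau\pm\abs{\xi}}$, $\angles{\abs{\tau}-\abs{\xi}}$ change only by bounded factors; or, more simply, (ii) every target norm in your scheme has negative second index, so you may relax it to an $L^2_{t,x}$-based norm and then use H\"older with $\psi\in L^\infty_{t,x}$ together with the product law of Theorem \ref{Thm3}'s proof for the remaining factors --- which is in effect exactly what the paper does for its low-frequency pieces.
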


We remark that for existence one only needs that $V'(r)$ has polynomial growth, but to get persistence of higher regularity one must take additional derivatives of the equations, hence the same assumption is required on all higher derivatives.

The proof follows closely that of Theorem \ref{Thm3}. We do not add $A_\mu$ to each side of the wave equation for $A_\mu$, but use
\begin{gather*}
  \square A_\mu = - \epsilon_{\mu\nu\rho} \im Q^{\nu\rho}(\partial\overline{\phi},\partial\phi) + 2\epsilon_{\mu\nu\rho} \partial^\nu\left( A^\rho \abs{\phi}^2 \right),
  \\
  \partial^\mu A_\mu = 0,
  \\
  (\square+1) \phi = 2i A_\mu \partial^\mu\phi + A_\mu A^\mu \phi - \phi V'\left( \abs{\phi}^2 \right) + \phi,
\end{gather*}
with data
\begin{equation}\label{EnergyData}
  A_\mu(0) \in \dot H^{1/2},
  \qquad (\phi,\partial_t\phi)(0) \in H^1 \times L^2,
\end{equation}
whereas the data for $\partial_t A_\mu$ are given by the constraints \eqref{LorenzConstraint} and \eqref{ConstraintB}, hence they belong to $\dot H^{-1/2}$, recalling from Section \ref{Introduction} that $J^k(0) \in \dot H^{-1/2}$, with norm bounded in terms of the norm of \eqref{EnergyData}.

We modify \eqref{Splitting} by setting $2A_{\mu,\pm} = A_\mu \pm i^{-1}\abs{\nabla}^{-1} \partial_t A_\mu$. The splitting of $\mathbf A=(A_1,A_2)$ into divergence-free and curl-free parts now reads $\mathbf A = \mathbf A^{\text{df}} + \mathbf A^{\text{cf}}$, where $\mathbf A^{\text{df}}$ and $\mathbf A^{\text{cf}}$ are still given by \eqref{Adf} and \eqref{Acf}, but now $R_j$ is the Riesz transform
$$
  R_j = (-\Delta)^{-1/2}\partial_j,
$$
bounded on $L^p$, $1 < p < \infty$. Then \eqref{MainBilinear} remains valid, but without the term $\mathfrak B_3$, and with $\mathfrak B_1$ and $\mathfrak B_2$ given by \eqref{P1} and \eqref{P2}. Thus we obtain the system
\begin{gather*}
  \left(i\partial_t \pm \abs{\nabla} \right) A_{\mu,\pm} = \pm 2^{-1}\abs{\nabla}^{-1} \mathfrak M_\mu(A_+,A_-,\phi_+,\phi_-),
  \\
  \left(i\partial_t \pm \angles{\nabla} \right) \phi_\pm = \pm 2^{-1}\angles{\nabla}^{-1} \mathfrak N(A_+,A_-,\phi_+,\phi_-),
\end{gather*}
where
\begin{gather*}
  \mathfrak M_\mu(A_+,A_-,\phi_+,\phi_-)
  =
  - \epsilon_{\mu\nu\rho} \im Q^{\nu\rho}(\partial\overline{\phi},\partial\phi) + 2\epsilon_{\mu\nu\rho} \partial^\nu\left( A^\rho \abs{\phi}^2 \right),
  \\
  \mathfrak N(A_+,A_-,\phi_+,\phi_-)
  =
  2i (\mathfrak B_1 - \mathfrak B_2) + A_\mu A^\mu \phi - \phi V'\left( \abs{\phi}^2 \right) + \phi,
\end{gather*}
with $\mathfrak B_1$ and $\mathfrak B_2$ given by \eqref{P1} and \eqref{P2}. Here it is understood that $A_\mu = A_{\mu,+} + A_{\mu,-}$, $\phi = \phi_+ + \phi_-$, $\partial_t A_\mu = i\abs{\nabla}(A_{\mu,+} - A_{\mu,-})$, and $\partial_t \phi = i\angles{\nabla}(\phi_+ - \phi_-)$.
 The initial data are
\begin{gather*}
  A_{\mu,\pm}(0) = \frac12 \left( A_\mu \pm(0) i^{-1}\abs{\nabla}^{-1} \partial_t A_\mu(0) \right) \in \dot H^{1/2},
  \\
  \phi_{\pm}(0) = \frac12 \left( \phi(0) \pm i^{-1} \angles{\nabla}^{-1}\partial_t \phi(0) \right) \in H^1.
\end{gather*}
Local well-posedness reduces to proving
\begin{gather}
  \label{HomNonlinearA}
  \norm{\abs{\nabla}^{-1/2} \mathfrak M}_{H^{0,b-1+\varepsilon}} \lesssim B+B^m,
  \\
  \label{HomNonlinearB}
  \norm{\mathfrak N}_{H^{0,b'-1+\varepsilon}} \lesssim B+B^m,
\end{gather}
where
$$
  B = \sum_\pm \norm{\phi_\pm}_{X_\pm^{1,b'}}
  + \sum_{\mu=0}^2 \sum_\pm \norm{\abs{\nabla}^{1/2} A_{\mu,\pm}}_{X_\pm^{0,b}}.
$$

Let $P_{\abs{\xi}<1}$ and $P_{\abs{\xi} \ge 1}$ be the multipliers with symbols $\chi_{\abs{\xi}<1}$ and $\chi_{\abs{\xi} \ge 1}$, which we use to split $f(x)$ into low- and high-frequency parts: $f = P_{\abs{\xi}<1}f + P_{\abs{\xi} \ge 1}f$.

Note the estimates
\begin{gather}
  \label{LF1}
  \norm{\abs{\nabla}^{-1/2} P_{\abs{\xi}<1} f}_{L^2}
  \simeq \norm{\frac{\chi_{\abs{\xi}<1}}{\abs{\xi}^{1/2}} \widehat f(\xi)}_{L^2_\xi} \le \left( \int_{\abs{\xi} < 1} \frac{d\xi}{\abs{\xi}} \right)^{1/2} \bignorm{\widehat f\,}_{L^\infty} \lesssim \norm{f}_{L^1},
  \\
  \label{LF2}
  \norm{P_{\abs{\xi}<1} f}_{L^p} \lesssim \norm{\abs{\nabla}^{1/2} \angles{\nabla}^{1/2+\varepsilon} P_{\abs{\xi}<1} f}_{L^2} \lesssim \norm{\abs{\nabla}^{1/2} f}_{L^2}, \quad 4 \le p \le \infty.
\end{gather}

\subsection{Proof of \eqref{HomNonlinearA} for $\mathfrak M_{\mu,1} = -\epsilon_{\mu\nu\rho} \im Q^{\nu\rho}(\partial\overline{\phi},\partial\phi)$} Splitting into low and high frequencies and applying \eqref{LF1} we get
$$
  \norm{\abs{\nabla}^{-1/2} \mathfrak M_{\mu,1}}_{H^{0,b-1+\varepsilon}}
  \le
  \norm{\partial\phi}_{L^4_t(L^2_x)}^2
  +
  \norm{\angles{\nabla}^{-1/2} \mathfrak M_{\mu,1}}_{H^{0,b-1+\varepsilon}}
  \lesssim
  \sum_{\pm} \norm{\phi_\pm}_{X_\pm^{1,b'}}^2,
$$
where we used \eqref{M1estimate} and $\norm{\partial\phi}_{L^4_t(L^2_x)} \lesssim \norm{\partial\phi}_{H^{0,b'}} \lesssim \sum_{\pm} \norm{\phi_\pm}_{X_\pm^{1,b'}}$.

\subsection{Proof of \eqref{HomNonlinearA} for $\mathfrak M_{\mu,2} = 2\epsilon_{\mu\nu\rho} \partial^\nu\left( A^\rho \abs{\phi}^2 \right)$} By Leibniz's rule and (if $\nu = 0$) the fact that $\partial_t A_j = i\abs{\nabla}(A_{j,+}-A_{j,-})$ and $\partial_t \phi = i\angles{\nabla}(\phi_+ - \phi_-)$, we reduce to
\begin{align}
  \label{HomTrilinearA}
  \norm{\abs{\nabla}^{-1/2}\left( \abs{\nabla} u vw \right)}_{L^2_{t,x}}
  &\lesssim
  \norm{\abs{\nabla}^{1/2}u}_{H^{0,b}}
  \norm{v}_{H^{1,b'}}
  \norm{w}_{H^{1,b'}},
  \\
  \label{HomTrilinearB}
  \norm{\abs{\nabla}^{-1/2}\left( u \angles{\nabla} v w \right)}_{L^2_{t,x}}
  &\lesssim
  \norm{\abs{\nabla}^{1/2}u}_{H^{0,b}}
  \norm{v}_{H^{1,b'}}
  \norm{w}_{H^{1,b'}}.
\end{align}

First consider the low-frequency case where we replace $\abs{\nabla}^{-1/2}$ on the left hand side by $\abs{\nabla}^{-1/2} P_{\abs{\xi} < 1}$. Then \eqref{HomTrilinearA} reduces to \eqref{HomTrilinearB}, since by the triangle inequality in Fourier space, and assuming as we may that $\widehat u, \widehat v, \widehat w \ge 0$, we have
$$
  \mathcal F \left( \abs{\nabla} u vw \right)(\tau,\xi) \le \abs{\xi} \mathcal F \left( u vw \right)(\tau,\xi) + \mathcal F \left( u \angles{\nabla}v w \right)(\tau,\xi) + \mathcal F \left( u v \angles{\nabla} w \right)(\tau,\xi).
$$
But \eqref{HomTrilinearB} is easily proved by applying \eqref{LF1}:
\begin{align*}
  &\norm{\abs{\nabla}^{-1/2}P_{\abs{\xi} < 1}\left( u \angles{\nabla} v w \right)}_{L^2_{t,x}}
  \lesssim
  \norm{u \angles{\nabla} v w}_{L^2_t(L^1_x)}
  \\
  &\quad\le \norm{u}_{L^4_{t,x}} \norm{\angles{\nabla} v}_{L^\infty_t(L^2_x)} \norm{w}_{L^4_{t,x}}
  \lesssim
  \norm{\abs{\nabla}^{1/2}u}_{H^{0,b}}
  \norm{v}_{H^{1,b'}}
  \norm{w}_{H^{1,b'}},
\end{align*}
where we used the Sobolev embedding $\dot H^{1/2} \subset L^4$ on $\R^2$.

Now consider the high-frequency case where we replace $\abs{\nabla}^{-1/2}$ on the left hand side by $\abs{\nabla}^{-1/2} P_{\abs{\xi} \ge 1}$. This case obviously reduces to
\begin{align}
  \label{HomTrilinearC}
  \norm{\angles{\nabla}^{-1/2}\left( \abs{\nabla} u vw \right)}_{L^2_{t,x}}
  &\lesssim
  \norm{\abs{\nabla}^{1/2}u}_{H^{0,b}}
  \norm{v}_{H^{1,b'}}
  \norm{w}_{H^{1,b'}},
  \\
  \label{HomTrilinearD}
  \norm{\angles{\nabla}^{-1/2}\left( u \angles{\nabla} v w \right)}_{L^2_{t,x}}
  &\lesssim
  \norm{\abs{\nabla}^{1/2}u}_{H^{0,b}}
  \norm{v}_{H^{1,b'}}
  \norm{w}_{H^{1,b'}},
\end{align}
and if $u$ is replaced by $P_{\abs{\xi} \ge 1} u$, these in turn reduce to \eqref{TrilinearA} and \eqref{TrilinearB}. On the other hand, if $u$ is replaced by $P_{\abs{\xi} < 1} u$, \eqref{HomTrilinearC} and \eqref{HomTrilinearD} both follow from
\begin{align*}
  &\norm{\angles{\nabla}^{-1/2}\left( P_{\abs{\xi} < 1} u vw \right)}_{L^2_{t,x}}
  \lesssim
  \norm{P_{\abs{\xi} < 1} u vw}_{L^2_t(L^{4/3}_x)}
  \\
  &\quad\le
  \norm{P_{\abs{\xi} < 1} u}_{L^6_t(L^{\infty}_x)}
  \norm{v}_{L^6_t(L^{4}_x)}
  \norm{w}_{L^6_t(L^{2}_x)}
  \lesssim
  \norm{\abs{\nabla}^{1/2}u}_{H^{0,b}}
  \norm{v}_{H^{1,b'}}
  \norm{w}_{H^{0,b'}},
\end{align*}
where we used \eqref{LF2}.

\subsection{Proof of \eqref{HomNonlinearB} for $\mathcal N_1 = 2i(\mathfrak B_1 - \mathfrak B_2)$ and $\mathcal N_2 = A_\mu A^\mu \phi$} For the high-frequency part $P_{\abs{\xi} \ge 1}A_\mu$ the estimates in Section \ref{LWP} apply, but note that since the definition of $R_j$ has changed, the estimate \eqref{Symbol} is replaced by
$$
  \sigma(\eta,\zeta) = \abs{ \angles{\zeta} - \frac{\eta \cdot \zeta}{\abs{\eta}} }
  \lesssim
  \abs{\zeta} \theta^2(\eta,\zeta) + \frac{1}{\angles{\zeta}}.
$$
For the low-frequency part we reduce to
\begin{gather*}
  \norm{P_{\abs{\xi} < 1}uv}_{L^2_{t,x}}
  \le 2\norm{P_{\abs{\xi} < 1}u}_{L^\infty_{t,x}} \norm{v}_{L^2_{t,x}}
  \lesssim \norm{\abs{\nabla}^{1/2}u}_{H^{0,b}} \norm{v}_{H^{0,b'}},
  \\
  \norm{(P_{\abs{\xi} < 1}u)^2w}_{L^2_{t,x}}
  \le \norm{P_{\abs{\xi} < 1}u}_{L^\infty_{t,x}}^2 \norm{w}_{L^2_{t,x}}
  \lesssim \norm{\abs{\nabla}^{1/2}u}_{H^{0,b}}^2 \norm{w}_{H^{0,b'}},
\end{gather*}
where we used \eqref{LF2}.

\subsection{Proof of \eqref{HomNonlinearB} for $- \phi V'\left( \abs{\phi}^2 \right)$}
Assuming $\abs{V'(r)} \lesssim 1+r^M$ for any $M \ge 1$,
\begin{align*}
  &\norm{\phi V'\left( \abs{\phi}^2 \right)}_{H^{0,b'-1+\varepsilon}}
  \le
  \norm{\phi V'\left( \abs{\phi}^2 \right)}_{L^2_{t,x}}
  \\
  &\qquad\le
  \norm{\phi}_{L^2_{t,x}} + \norm{\phi}_{L^{4M+2}_{t,x}}^{2M+1}
  \lesssim
  \norm{\phi}_{L^2_{t,x}} + \norm{\phi}_{L_t^{4M+2} H^1}^{2M+1}
  \lesssim
  \norm{\phi}_{H^{1,b'}} + \norm{\phi}_{H^{1,b'}}^{2M+1},
\end{align*}
where we used the Sobolev estimate $\norm{f}_{L^p} \lesssim \norm{f}_{H^1}$, $2 \le p < \infty$.

\end{document}